\tikzset{global scale/.style={
		scale=#1,
		every node/.style={scale=#1}
	}
}
\def\PP{\mathbb{P}}
\def\RR{\mathbb{R}}
\def\ZZ{\mathbb{Z}}
\def\shF{\mathscr{F}}
\def\shO{\mathcal{O}}
\newcommand\conv{conv}
\newcommand\Pic{Pic}
\newcommand\vol{vol}
\newcommand\Vol{Vol}
\newcommand\divisor{div}
\newcommand{\vt}{\hspace{1mm}\middle\vert\hspace{1mm}}
\newtheorem{theorem}{Theorem}[section]
\newtheorem{corollary}[theorem]{Corollary}
\newtheorem{proposition}[theorem]{Proposition}
\newtheorem{lemma}[theorem]{Lemma}
\newtheorem{conjecture}[theorem]{Conjecture}
\theoremstyle{remark}
\newtheorem{remark}[theorem]{Remark}
\newtheorem{example}[theorem]{Example}
\theoremstyle{definition}
\begin{document}
	\title{A Reider-type Result for Smooth Projective Toric Surfaces}
	\author{Bach Le Tran}
	%\email{b.tran@sms.ed.ac.uk}
	\address{School of Mathematics\\ University of Edinburgh\\ James Clerk Maxwell Building\\ Peter Guthrie Tait Road\\ Edinburgh EH9 3FD}
	\begin{abstract}
		Let $L$ be an ample line bundle over a smooth projective toric surface $X$. Then $L$ corresponds to a very ample lattice polytope $P$ that encodes many geometric properties of $L$. In this article, by studying $P$, we will give some necessary and sufficient numerical criteria for the adjoint series $|K_X+L|$ to be either nef or (very) ample.
	\end{abstract}
	%\pagenumbering{roman}
	\maketitle

\section{Introduction}

The problem of determining whether a line bundle is nef or (very) ample is an important question in algebraic geometry. The Nakai-Moishezon criterion \cite{Nakai1963, Moishezon1964} states that a Cartier divisor $D$ on a proper scheme $X$ over an algebraically closed field is ample if and only if $D^{\dim(Y)}\cdot Y > 0$ for every closed integral subscheme $Y$ of $X$. For toric varieties, a special form of the criterion holds: if $D\cdot C>0$ for every torus-invariant curve $C\subset X$ then $D$ is ample. Furthermore, if $D\cdot C\ge 0$ for every torus-invariant curve $C\subset X$ then $D$ is globally generated \cite{Laterveer1996, Mavlyutov2000, Mustata2002}. However, the question is more complicated when we consider the adjoint bundle $D+K_X$. Namely, are there numerical conditions for $D\cdot C$ so that $D+K_X$ is globally generated or ample? Fujita conjectured the following:

\begin{conjecture}[\cite{Fujita1985}]\label{Fujita conjecture}
	Let $X$ be an $n$-dimensional projective algebraic variety, smooth or with mild singularities, and $D$ an ample divisor on $X$. Then
	\begin{enumerate}[(1)]
		\item For $t\ge n+1$, $tD+K_X$ is basepoint free.
		\item For $t\ge n+2$, $tD+K_X$ is very ample.
	\end{enumerate}
\end{conjecture}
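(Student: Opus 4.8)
The plan is to attack Conjecture~\ref{Fujita conjecture} by the adjoint-ideal method, reducing both parts to a vanishing theorem applied to a carefully engineered singular divisor. For part~(1), fix a point $x\in X$; it suffices to prove that the evaluation map $H^0(X,\mathcal{O}_X(K_X+tD))\to \mathcal{O}_X(K_X+tD)\otimes k(x)$ is surjective for every $x$. The idea is to manufacture an effective $\QQ$-divisor $\Delta\equiv_{\QQ} cD$ with $0\le c<t$ whose multiplier ideal sheaf $\mathcal{J}(X,\Delta)$ has $x$ as an isolated point of its cosupport, so that $\mathcal{J}(X,\Delta)$ agrees with $\mathfrak{m}_x$ near $x$. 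Since the difference $(K_X+tD)-(K_X+\Delta)\equiv_{\QQ}(t-c)D$ is ample, Nadel vanishing (a singular-metric form of Kawamata--Viehweg) yields $H^1\bigl(X,\mathcal{O}_X(K_X+tD)\otimes\mathcal{J}(X,\Delta)\bigr)=0$. Twisting the sequence $0\to\mathcal{J}(X,\Delta)\to\mathcal{O}_X\to\mathcal{O}_X/\mathcal{J}(X,\Delta)\to 0$ by $K_X+tD$ and taking cohomology then forces the evaluation at $x$ to be surjective, giving base-point freeness.

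The construction of $\Delta$ is where the numerical threshold enters. Because $D$ is ample we have $D^n>0$, and a dimension count on the sections of $\mathcal{O}_X(mD)$ produces, for $m\gg 0$, divisors in $|mD|$ of large multiplicity at $x$; rescaling gives an effective $\QQ$-divisor numerically proportional to $D$ that is highly singular at $x$. The governing local fact is that at a smooth point a $\QQ$-divisor of multiplicity exceeding $n$ fails to be log canonical there, so $x$ is forced into the cosupport of its multiplier ideal. The bound $t\ge n+1$ is precisely what is needed to build such a divisor with coefficient $c<t$ while leaving the ample remainder $(t-c)D$ positive enough for Nadel vanishing. For part~(2) one runs the same machine, but now arranges $\mathcal{J}(X,\Delta)$ to cosupport a length-two scheme — either the pair $\{x,y\}$ of distinct points or the first-order neighborhood $\mathfrak{m}_x^2$ — so that global sections separate points and tangent directions; imposing this extra condition costs one further unit of positivity, which is the source of the jump to $t\ge n+2$ for very ampleness.

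The main obstacle, and the reason the conjecture is open in general, is that the divisor produced above almost never isolates $x$: its non-klt locus is typically positive-dimensional, so the multiplier ideal cosupports along a subvariety rather than at a single point. One must cut this locus down to $x$ by an inductive argument on the dimension of the minimal log canonical center, perturbing $\Delta$ by auxiliary divisors and invoking subadjunction (the tie-breaking technique of Angehrn--Siu, Kawamata, and Helmke) to pass from a center of dimension $d$ to one of dimension $d-1$. The difficulty is quantitative: each reduction step consumes positivity, and the known general method yields only the quadratic bound $t\ge\tfrac12 n(n+1)+1$ for base-point freeness rather than the conjectured sharp $t\ge n+1$.

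I therefore expect the central obstacle to be the inductive control of log canonical centers \emph{at the optimal linear threshold}. The low-dimensional cases are accessible — $n=2$ follows from Reider's rank-two bundle technique and $n=3$ from the work of Ein--Lazarsfeld, with a few further values of $n$ known for the base-point-free part — but closing the gap between the quadratic bound and the sharp constants $n+1$ and $n+2$ in arbitrary dimension is exactly the unresolved heart of the conjecture, and the very-ampleness statement~(2) is harder still since separating $2$-jets globally magnifies the loss incurred at each inductive step.
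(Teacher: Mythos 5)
The statement you were asked about is Conjecture~\ref{Fujita conjecture} --- it is an open conjecture, and the paper neither proves it nor claims to: it only records that the conjecture is known for toric varieties \cite{Fujino2003, Payne2006} and that for smooth surfaces it follows from Reider's theorem \cite{Reider1988}, and then proves the much more modest Proposition~\ref{My Reider variant}, a Reider-type statement for smooth projective toric surfaces. So there is no proof in the paper to compare yours against, and more importantly, what you have written is not a proof either. Your sketch is an accurate summary of the standard multiplier-ideal strategy: produce $\Delta\equiv_{\QQ} cD$ with $c<t$ singular enough at $x$ that $x$ lies in the cosupport of $\mathcal{J}(X,\Delta)$, apply Nadel vanishing to the ample difference $(t-c)D$, and conclude surjectivity of evaluation. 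But the decisive step --- cutting the non-klt locus down so that $x$ is an \emph{isolated} point of the cosupport, at coefficient $c<n+1$ --- is exactly what you do not supply, and you candidly say so yourself: the tie-breaking induction on log canonical centers is known to cost positivity at each step, and the best general bound it yields is roughly quadratic in $n$ (Angehrn--Siu), not the linear thresholds $n+1$ and $n+2$. A proposal whose final paragraph identifies its own central step as ``exactly the unresolved heart of the conjecture'' is a research program, not a proof, and it should not be presented as an attempt to establish the statement.

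Two smaller points. First, your local criterion is stated slightly loosely: $\operatorname{mult}_x\Delta>n$ forces $(X,\Delta)$ to be non-log-canonical at $x$, hence $x$ lies in the cosupport of $\mathcal{J}(X,\Delta)$, but the naive dimension count on $H^0(X,\shO_X(mD))$ only guarantees $c$ slightly above $n/(D^n)^{1/n}$, which is why ampleness alone (giving merely $D^n\ge 1$) lands you at the threshold $c\approx n$ with no room to spare --- any loss in the center-cutting induction destroys the bound, which is the quantitative form of the obstruction you name. Second, if your goal was the context of this paper, note that the toric case admits a completely different and complete argument: positivity against torus-invariant curves controls global generation and ampleness combinatorially (as in Lemma~\ref{K_X x D_i=b_i-2} and Corollary~\ref{(L+KX)D=LD-D^2-2}), which is how \cite{Fujino2003, Payne2006} and Proposition~\ref{My Reider variant} proceed; no multiplier ideals are needed there, and the paper's Proposition~\ref{My Reider variant} in fact refines Fujita's bounds on toric surfaces by classifying the numerical obstructions $(D\cdot L, D^2)$ exactly.
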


The conjecture is true for toric varieties \cite{Fujino2003, Payne2006}. For smooth surfaces, Fujita's conjecture follows from Reider's theorem \cite{Reider1988}.

In this article, we will present a combinatorial proof for a Reider-type result for smooth projective toric surfaces.

\begin{proposition}\label{My Reider variant}
	Let $X$ be a smooth projective toric surface not isomorphic to $\PP^2$, and let $L$ be an ample line bundle on $X$.
	\begin{enumerate}
		\item The adjoint series $|K_X+L|$ is not base point free if and only if there exists an effective torus-invariant divisor $D\subset X$ such that 
		\begin{align*}
		%D\cdot L=0 &\text { and } D^2=-1\text{; or}\\
		D\cdot L=1&\text{ and } D^2=0.
		\end{align*}
		
		\item The adjoint series $|K_X+L|$ is not ample if and only if there exists an effective torus-invariant divisor $D\subset X$ such that either
		\begin{align*}
		D\cdot L=1 &\text { and } D^2=-1 \text{ or } D^2=0 \text{; or}\\
		D\cdot L=2&\text{ and } D^2=0\text{; or}\\
		D\cdot L=3&\text{ and } D^2=1.
		\end{align*}
		Furthermore, if $L^2\ge 10$, then $|K_X+L|$ is not ample if and only if there exists an effective torus-invariant divisor $D\subset X$ such that either 
		\begin{align*}
		D\cdot L=1 &\text { and } D^2=-1 \text{ or } D^2=0 \text{; or}\\
		D\cdot L=2&\text{ and } D^2=0.
		\end{align*}
	\end{enumerate} 
\end{proposition}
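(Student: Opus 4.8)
The plan is to convert both statements into intersection inequalities against the torus-invariant curves, to verify the listed numerical normal forms by a direct computation, and then to prove that whenever $|K_X+L|$ fails the relevant property one can always locate a torus-invariant curve realizing one of those normal forms.

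First I would set up the dictionary. Let $D_1,\dots,D_d$ be the torus-invariant prime divisors, that is, the edges of the polytope $P$ associated to $L$; since $L$ is ample, each $L\cdot D_i$ is the lattice length of the $i$-th edge and in particular $L\cdot D_i\ge 1$. Each $D_i\cong\PP^1$ is a smooth rational curve, so adjunction gives $K_X\cdot D_i=-2-D_i^2$ and hence
\[
(K_X+L)\cdot D_i = L\cdot D_i - D_i^2 - 2 .
\]
By the toric Nakai--Moishezon criterion recalled in the introduction, $|K_X+L|$ is base point free (equivalently nef, equivalently globally generated) precisely when $(K_X+L)\cdot D_i\ge 0$ for all $i$, and is ample precisely when every such product is positive. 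Thus failure of base point freeness is equivalent to the existence of some $D_i$ with $L\cdot D_i\le 1+D_i^2$, and failure of ampleness to the existence of some $D_i$ with $L\cdot D_i\le 2+D_i^2$. Substituting $D^2\in\{-1,0,1\}$ shows that each pair listed in the statement forces $(K_X+L)\cdot D<0$ in case (1) and $(K_X+L)\cdot D\le 0$ in case (2); this is the easy backward implication of each equivalence, and also shows one may take the witnessing $D$ to be a torus-invariant curve.

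The forward implication is the heart of the matter and the step I expect to be the main obstacle, because a single obstructing curve may have large positive self-intersection and hence fall outside the listed normal forms. Concretely, I would prove the combinatorial lemma: if $X\neq\PP^2$ is smooth and some $D_i$ satisfies $L\cdot D_i\le 1+D_i^2$ (resp.\ $\le 2+D_i^2$), then some $D_j$ has $(L\cdot D_j,D_j^2)=(1,0)$ (resp.\ lies in $\{(1,-1),(1,0),(2,0),(3,1)\}$). The surfaces $X=\FF_a$ are instructive: ampleness forces the lattice lengths of the two fibres to be small relative to the section of self-intersection $+a$, so a high-self-intersection obstruction automatically produces a fibre of type $(1,0)$ or $(2,0)$. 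In general I would run this argument on the cyclic sequence of self-intersections $(D_i^2)$, using the smoothness relations $u_{i-1}+u_{i+1}=-D_i^2\,u_i$ on the primitive ray generators together with the ampleness lower bounds $L\cdot D_{i\pm1}\ge 1$ on the neighbouring edges, and reduce to the minimal models $\PP^2$ and $\FF_a$ by equivariantly contracting $(-1)$-curves and inducting on $d$. Tracking how edge lengths evolve under these contractions, while preserving an obstructing curve, is the delicate part.

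Finally, for the refinement under $L^2\ge 10$ I would argue by contradiction using the Hodge index theorem. Suppose $|K_X+L|$ is not ample yet no curve of type $(1,-1)$, $(1,0)$ or $(2,0)$ exists. By case (1) the absence of a $(1,0)$ curve makes $K_X+L$ nef, and by case (2), already proved, there must be a curve $D$ of type $(3,1)$, so $D^2=1$ and $(K_X+L)\cdot D=0$. Since $D^2>0$, the Hodge index theorem makes the intersection form negative definite on $D^{\perp}$, so the nef class $K_X+L$, which satisfies $(K_X+L)^2\ge 0$ and lies in $D^{\perp}$, must be numerically trivial; hence $L\equiv -K_X$ and $L^2=K_X^2=12-d\le 8$ by Noether's formula for the rational surface $X$ with $d\ge 4$ rays. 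This contradicts $L^2\ge 10$, so a curve of one of the three remaining types must exist, which completes the refinement.
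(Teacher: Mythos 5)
Your setup (adjunction on torus-invariant curves, the toric nef/ample criterion, and the verification of the backward implications) matches the paper, and your Hodge-index-plus-Noether argument for the $L^2\ge 10$ refinement is a correct and genuinely different alternative to the paper's one-line contradiction $9=(L\cdot D)^2<10\le L^2D^2$. But the heart of the proof --- your ``combinatorial lemma'' that an obstructing curve with large positive self-intersection forces the existence of a curve in one of the listed normal forms --- is never proved. You state it, flag it as the delicate step, and propose to establish it by equivariantly contracting $(-1)$-curves and inducting on the number of rays, but you do not carry this out, and the refinement argument itself relies on this unproved lemma (it invokes ``case (2), already proved''). That is a genuine gap, not a routine omission.

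Moreover, the induction you sketch runs into a concrete obstacle. If $\pi\colon X\to X'$ contracts a torus-invariant $(-1)$-curve $E$ and $L$ is ample on $X$, then $K_X+L=\pi^*\bigl(K_{X'}+\pi_*L\bigr)+(1-L\cdot E)E$, so for an obstructing curve $D_i\ne E$ meeting $E$ one gets
\[
(K_{X'}+\pi_*L)\cdot \pi_*D_i=(K_X+L)\cdot D_i+(L\cdot E-1)(E\cdot D_i),
\]
which can be strictly positive when $L\cdot E\ge 2$: the obstruction need not survive the contraction, and conversely a normal-form curve on $X'$ need not lift to one on $X$ (its strict transform has different invariants if it passes through the blown-up point). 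So ``preserving an obstructing curve'' is not just delicate but fails in the naive form. The paper avoids this entirely: it uses the classification of smooth toric surfaces only to reduce to the case where the fan has at least $5$ rays (treating $\PP^1\times\PP^1$ and $\shF_a$ by explicit nef-cone computations), and then replaces any induction by two lattice-point counts --- Pick's theorem together with Arkinstall's lemma gives $L^2\ge L\cdot D+4$ for every torus-invariant curve $D$, and a pentagon argument gives $L^2\ge 9$ when some edge has lattice length $4$ --- which, combined with the Hodge inequality $(L\cdot D)^2\ge L^2D^2$, pin down $(L\cdot D, D^2)$ to exactly the listed pairs. Some such quantitative input is what your proposal is missing.
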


As a convention, in this article, we will follow the notations in \cite{Cox2011}. In particular, we will always use $M$ to denote the ambient lattice if there is no confusions.

\subsection*{Acknowledgments}
We would like to thank Milena Hering for suggesting the problem and for her invaluable guidance. We also want to thank Ivan Cheltsov for some of the comments.
\section{Toric Surfaces Reviewed}
Let $A$ be an ample line bundle over a projective toric variety $X$ corresponding to a polytope $P\subset M_{\RR}$. Then we have a combinatorial interpretation of the intersection number $A\cdot C$ where $C\subset X$ is any torus-invariant curve as follows.
\begin{lemma}[{\cite[(1.4) and Page 457]{Laterveer1996}}]\label{lattice length}
	Let $A$ be an ample line bundle on a projective toric variety $X$ corresponding to a polytope $P$. For a torus invariant curve $C$, let $E$ be the corresponding edge on $P$. Then $A\cdot C$ is equal to the lattice length of $E$, i.e., 
	\[A\cdot C=|E\cap M|-1.\]
\end{lemma}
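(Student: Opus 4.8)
The plan is to compute $A\cdot C$ by restricting $A$ to the curve $C$ and reading off a degree on $\PP^1$. Recall the orbit–cone correspondence for the fan $\Sigma$ of $X$: since $A$ is ample, $\Sigma$ is the normal fan of $P$, and a face $Q$ of $P$ of dimension $d$ corresponds to a cone $\sigma_Q\in\Sigma$ of dimension $n-d$ (where $n=\dim X$), with associated orbit closure $V(\sigma_Q)$. A torus-invariant curve $C$ is $V(\tau)$ for a wall $\tau$, a cone of dimension $n-1$, and the corresponding face is exactly the edge $E$, whose two endpoints are the vertices $m_\sigma,m_{\sigma'}$ of $P$ attached to the two maximal cones $\sigma,\sigma'$ with $\tau=\sigma\cap\sigma'$. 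Since $C=V(\tau)$ is a complete normal toric variety of dimension one, it is isomorphic to $\PP^1$, and by definition of the intersection number of a line bundle with a complete curve we have $A\cdot C=\deg\left(A|_C\right)$.

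First I would describe $A|_C$ combinatorially. The orbit closure $V(\tau)$ is the toric variety of the star fan of $\tau$, whose character lattice is the rank-one lattice $M(\tau)=M\cap\tau^\perp$. Using the standard description of the restriction of a torus-invariant line bundle to an orbit closure \cite{Cox2011}, the bundle $A|_C$ is the globally generated line bundle on $C\cong\PP^1$ whose polytope is the edge $E$, translated by the lattice vertex $m_\sigma$ so as to lie in $\tau^\perp$ as a segment for the lattice $M(\tau)$. The edge $E$ is parallel to $\tau^\perp$, so its direction vector $w=m_{\sigma'}-m_\sigma\in M$ lies in $\tau^\perp$; writing $w=\ell\,v$ with $v\in M$ primitive gives $v\in M\cap\tau^\perp=M(\tau)$, and since $M(\tau)$ is saturated in $M$, the vector $v$ is primitive in $M(\tau)$ as well. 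Hence the number of lattice points on $E$ is the same whether measured in $M$ or in $M(\tau)$, and the lattice length of this segment is $\ell=|E\cap M|-1$ either way.

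It then remains to recall that on $\PP^1$ a globally generated line bundle whose polytope is a lattice segment of lattice length $\ell$ is $\shO_{\PP^1}(\ell)$, of degree $\ell$, since the standard interval of lattice length $\ell$ is precisely the polytope of $\shO_{\PP^1}(\ell)$. Combining these steps yields $A\cdot C=\deg\left(A|_C\right)=\ell=|E\cap M|-1$, as claimed. The step I expect to require the most care—and the one I would write out most carefully—is the combinatorial identification of the restriction $A|_{V(\tau)}$ with the edge $E$ in the correct rank-one lattice $M(\tau)$; once that dictionary is in place the degree computation on $\PP^1$ is immediate. As a cross-check on sign and normalization, I would also verify the result by the more hands-on route that avoids the restriction formalism: using the Cartier data $\{m_\sigma\}$ of $A$ together with the wall relation at $\tau$, one expresses $A\cdot C$ through the standard toric intersection formula in terms of the displacement $m_{\sigma'}-m_\sigma$ and identifies that displacement with the lattice length of $E=\overline{m_\sigma m_{\sigma'}}$.
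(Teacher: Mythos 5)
Your proof is correct. Note that the paper does not prove this lemma at all: it is quoted verbatim from Laterveer \cite[(1.4) and p.~457]{Laterveer1996}, so there is no internal argument to compare against. What you have written is essentially the standard argument behind the cited result: identify $C=V(\tau)\cong\PP^1$ for a wall $\tau$, restrict $A$ to $C$ using the face--orbit dictionary of the normal fan, observe that the polytope of $A|_C$ is (a translate of) the edge $E$ viewed in the rank-one lattice $M(\tau)=M\cap\tau^\perp$, and read off $\deg\shO_{\PP^1}(\ell)=\ell=|E\cap M|-1$. You correctly handle the two points where this can silently go wrong: the restriction--face dictionary needs the Cartier data $\{m_\sigma\}$ to be the vertices of $P$, which your ampleness hypothesis guarantees (for merely nef $A$ the ``edge'' can degenerate and the statement needs rephrasing), and the lattice-length count must be invariant under passing from $M$ to $M(\tau)$, which you justify via saturation of $M\cap\tau^\perp$. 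Your closing cross-check via the wall relation and the displacement $m_{\sigma'}-m_\sigma$ is exactly the alternative computation one finds in the toric intersection-theory literature, so both routes you sketch are sound; either alone would suffice.
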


For our purpose, we will need to use the classification of smooth projective toric surfaces: every smooth complete toric surfaces is a finite blowup of either $\PP^2$, $\PP^1\times\PP^1$, or the Hirzebruch surface $\shF_a$, where $a\ge 2$ ( {\cite[Theorem 10.4.3]{Cox2011}}). Another important fact that we will use is that every ample line bundle on a smooth projective toric surface is also very ample.

\begin{lemma}[{\cite[Theorem 6.1.15]{Cox2011}}]
	A line bundle on a smooth complete toric variety is ample if and only if it is very ample. 
\end{lemma}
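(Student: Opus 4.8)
The plan is to convert both statements into assertions about the integer pairs $(L\cdot C_i,\ C_i^2)$ attached to the edges of $P$, and then to read everything off the polytope via Lemma \ref{lattice length}. On a complete toric surface the torus-invariant irreducible curves are precisely the boundary divisors $C_1,\dots,C_n\cong\PP^1$ corresponding to the edges $E_1,\dots,E_n$ of $P$, and the cone of curves is generated by their classes. Hence, by the toric criteria recalled in the introduction (together with the fact that a nef line bundle on a toric variety is globally generated, so that ``nef'' and ``base point free'' coincide here), $|K_X+L|$ is base point free iff $(K_X+L)\cdot C_i\ge 0$ for all $i$, and $K_X+L$ is ample iff $(K_X+L)\cdot C_i> 0$ for all $i$. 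Writing $\ell_i=L\cdot C_i$, which by Lemma \ref{lattice length} is the lattice length of $E_i$, and $s_i=C_i^2$, adjunction on $C_i\cong\PP^1$ gives $(K_X+C_i)\cdot C_i=-2$, whence $(K_X+L)\cdot C_i=\ell_i-s_i-2$. The whole proposition is thus a statement about which pairs $(\ell_i,s_i)$ can occur along the boundary.

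The ``if'' directions are then a direct computation. Any effective torus-invariant $D=\sum a_iC_i$ with $L\cdot D=1$ must be a single length-one boundary curve, and each listed pair gives $\ell-s-2\in\{-1,0\}$: the value $-1$ occurs only for $(\ell,s)=(1,0)$ and forces $(K_X+L)\cdot D<0$, while $(1,-1),(2,0),(3,1)$ give $(K_X+L)\cdot D=0$. In every case $(K_X+L)\cdot D\le 0$ on a nonzero effective $D$, so $K_X+L$ is not ample, and it fails to be nef (hence base point free) exactly in the $(1,0)$ case. I will also record here the Hodge index inequality $\ell_i^2=(L\cdot C_i)^2\ge L^2\,C_i^2=L^2 s_i$ together with $L^2=2\,\vol(P)\ge 2$, the bound $L^2\ge 2$ holding precisely because $X\not\cong\PP^2$ means $P$ is not a unimodular triangle. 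This already forces $s_i\le 0$ whenever $\ell_i=1$, so the only length-one obstructions are $(1,-1)$ and $(1,0)$, matching the list.

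For the ``only if'' directions I must show that a single bad boundary curve, one with $\ell_i-s_i-2\le 0$ (resp. $<0$), forces the existence of a boundary curve or effective torus-invariant divisor with one of the listed minimal pairs. When the bad curve already has $\ell_i\le 3$, its pair is pinned down by $s_i\ge \ell_i-2$ together with the Hodge bound $s_i\le \ell_i^2/L^2$, and one checks it is among the listed types. The essential difficulty is a bad curve of large degree and large self-intersection, the model case being the $+a$ section $C_\infty$ on a Hirzebruch surface $\shF_a$. Here I will use the classification recalled above, realizing $X$ as an iterated blowup of $\PP^1\times\PP^1$ or $\shF_a$ (the remaining minimal model $\PP^2$ being excluded). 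On the minimal surfaces the claim is a short direct check: on $\PP^1\times\PP^1$ a bad curve is literally a ruling fiber of type $(1,0)$, and on $\shF_a$ ampleness of $L$ drives the fiber degree $L\cdot F$ down to $1$ as soon as $C_\infty$ is bad, again producing a $(1,0)$ fiber (or, on $\shF_1$, the $(-1)$-section appears as a $(1,-1)$ curve). I then propagate this through the blowups, noting that a toric blowup truncates a corner of $P$ and introduces an exceptional edge whose curve is a $(-1)$-curve; such a curve never obstructs base point freeness and obstructs ampleness only as a $(1,-1)$ curve, so it can supply a listed obstruction but never destroys one.

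Finally, the strengthening under $L^2\ge 10$ is immediate from Hodge index: a boundary curve of type $(3,1)$ would give $9=\ell_i^2\ge L^2 s_i=L^2$, so no such curve exists once $L^2\ge 10$, and the obstruction produced above is then necessarily of type $(1,-1)$, $(1,0)$, or $(2,0)$. The main obstacle is precisely the ``only if'' direction for bad curves of non-minimal type: proving that such a curve always coexists with one of the four listed minimal obstructions. This is where the classification and a careful analysis of how the ampleness of $L$ bounds the lattice lengths of the short edges of $P$ adjacent to a high self-intersection curve are indispensable; the remainder is bookkeeping with the formula $(K_X+L)\cdot C_i=\ell_i-s_i-2$.
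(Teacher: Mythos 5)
Your proposal does not address the statement at hand. The statement is the lemma that on a smooth complete toric variety a line bundle is ample if and only if it is very ample (quoted from \cite[Theorem 6.1.15]{Cox2011}); the paper offers no proof of its own, taking it as a cited fact. What you have written is instead a proof sketch for Proposition \ref{My Reider variant}, the adjoint-series classification: adjunction on boundary curves, the Hodge index inequality, the classification of minimal toric surfaces, and propagation through blowups all bear on that proposition, and none of it touches the ample-versus-very-ample question. Indeed your sketch implicitly \emph{uses} the lemma (when it passes from an ample $L$ to the lattice polytope $P$ and reads off edge lengths via Lemma \ref{lattice length}), so it cannot serve as a proof of it.

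For the record, the standard argument for the lemma is short and combinatorial. An ample line bundle $L$ on a complete toric variety corresponds to a full-dimensional lattice polytope $P$ whose normal fan is $\Sigma$, and $L$ is very ample precisely when, for every vertex $v$ of $P$ with corresponding maximal cone $\sigma_v$, the semigroup generated by $(P\cap M)-v$ is all of $\sigma_v^{\vee}\cap M$. When $X$ is smooth, each $\sigma_v$ is unimodular, so the primitive directions of the edges of $P$ emanating from $v$ form a basis of $M$ dual to the ray generators of $\sigma_v$; since $(P\cap M)-v$ contains these basis vectors, it generates $\sigma_v^{\vee}\cap M$, giving very ampleness. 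The converse is immediate. As a side remark: even read as an attempt at Proposition \ref{My Reider variant}, your blowup-propagation step is not how the paper proceeds --- the paper handles all surfaces whose fan has at least five rays uniformly, using Pick's theorem and Lemma \ref{pigeonhole} to establish $L^2\ge L\cdot D+4$ (Lemma \ref{L^2>=LD+4}) and then closing the case analysis with the Hodge inequality, so no tracking of obstructions through blowups is needed.
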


Smooth toric surfaces are interesting objects to work with; partially because of their computability. For example, we have the following lemma.
\begin{lemma}[{\cite[Proposition 10.4.11]{Cox2011}}]\label{K_X x D_i=b_i-2}
	Let $u_0,\ldots, u_r$ be ray generators of a smooth complete fan $\Sigma$ in $N_{\RR}\cong \RR^2$. Let $X=X_{\Sigma}$ be the smooth projective toric surface from $\Sigma$ and $D_i=V(u_i)$ for $0\le i\le r$. Let $K_X$ be the canonical divisor $K_X=-\sum_{i=0}^rD_i$. Then
	\[K_X\cdot D_i=b_i-2,\]
	where the $b_1,\ldots,b_{r-1}$ are integers such that $u_{i-1}+u_{i+1}=b_iu_i$ for all $0\le i\le r$, where $u_{-1}=u_r$ and $u_{r+1}=u_0$.
\end{lemma}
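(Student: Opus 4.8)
The plan is to reduce everything to intersection numbers among the torus-invariant divisors $D_0,\ldots,D_r$ and then to extract the self-intersection $D_i^2$ from the principal divisors coming from characters. First I would record the local intersection data forced by smoothness of $\Sigma$: for each $i$, the two-dimensional cone spanned by $u_i$ and $u_{i+1}$ is smooth, so $\{u_i,u_{i+1}\}$ is a $\ZZ$-basis of $N$ and the associated torus-fixed point is a smooth point of $X$ at which the curves $D_i$ and $D_{i+1}$ meet transversally in a single reduced point; hence $D_i\cdot D_{i+1}=1$. For rays that are not adjacent in the cyclic ordering, the divisors $D_i$ and $D_j$ share no cone and are therefore disjoint, so $D_i\cdot D_j=0$. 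Thus the only contributions to $D_j\cdot D_i$ are $D_{i-1}\cdot D_i=1$, $D_{i+1}\cdot D_i=1$, and the self-intersection $D_i^2$.

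Next I would pin down $D_i^2$ using the linear equivalences furnished by characters. For every $m\in M$ one has the principal divisor $\divisor(\chi^m)=\sum_{j}\langle m,u_j\rangle D_j\sim 0$, and intersecting this relation with $D_i$ annihilates every term except the three neighbors of $u_i$:
\[
\langle m,u_{i-1}\rangle + \langle m,u_i\rangle\, D_i^2 + \langle m,u_{i+1}\rangle = 0 .
\]
Combining the first and last terms as $\langle m,\,u_{i-1}+u_{i+1}\rangle$ and substituting the wall relation $u_{i-1}+u_{i+1}=b_i u_i$ yields $(b_i+D_i^2)\langle m,u_i\rangle=0$ for all $m\in M$. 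Choosing $m$ with $\langle m,u_i\rangle\neq 0$, which is possible since $u_i\neq 0$, forces $D_i^2=-b_i$.

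Finally I would assemble the answer from $K_X=-\sum_{j=0}^r D_j$. Intersecting with $D_i$ and using that only the three neighbors contribute,
\[
K_X\cdot D_i = -\bigl(D_{i-1}\cdot D_i + D_i^2 + D_{i+1}\cdot D_i\bigr) = -\bigl(1 + (-b_i) + 1\bigr) = b_i-2 ,
\]
which is the claim. The step I expect to require the most care is justifying the local intersection numbers $D_i\cdot D_{i+1}=1$: this is exactly where smoothness of $\Sigma$ enters, guaranteeing that adjacent invariant curves meet in one reduced point and that the self-intersection computation through characters is valid. Everything else is bookkeeping with the principal-divisor relation.
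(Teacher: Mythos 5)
Your proof is correct: the paper itself gives no proof of this lemma, citing it directly as \cite[Proposition 10.4.11]{Cox2011}, and your argument reproduces the standard one from that reference --- smoothness and completeness give $D_i\cdot D_{i\pm 1}=1$ and $D_i\cdot D_j=0$ for non-adjacent rays, the wall relation $u_{i-1}+u_{i+1}=b_iu_i$ applied to $\divisor(\chi^m)\cdot D_i=0$ gives $D_i^2=-b_i$, and then $K_X\cdot D_i=-(1-b_i+1)=b_i-2$. All steps are sound, including the one place needing care (the complete fan in $\RR^2$ has at least three rays, so $u_{i-1}\neq u_{i+1}$ and the three surviving terms in the character relation are genuinely distinct).
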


The following corollary follows directly from \cite[Lemma 10.4.1]{Cox2011} and Lemma \ref{K_X x D_i=b_i-2}.
\begin{corollary}\label{(L+KX)D=LD-D^2-2}
	Let $u_0,\ldots, u_r$ be ray generators of a smooth complete fan $\Sigma$ in $N_{\RR}\cong \RR^2$. Let $X=X_{\Sigma}$ be the smooth projective toric surface from $\Sigma$ and $D_i=V(u_i)$ for $0\le i\le r$. Let $K_X$ be the canonical divisor $K_X=-\sum_{i=0}^rD_i$. Then for $0\le i\le r$,
	\[(L+K_X)\cdot D_i=L\cdot D_i-D_i^2-2.\]
\end{corollary}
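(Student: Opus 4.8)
The plan is to reduce the claim to the bilinearity of the intersection product together with the two computations we already have in hand. Since the intersection pairing on $X$ is $\ZZ$-bilinear, I would first write
\[
(L+K_X)\cdot D_i = L\cdot D_i + K_X\cdot D_i,
\]
so that the entire content of the corollary lies in re-expressing the term $K_X\cdot D_i$ in terms of the self-intersection $D_i^2$.

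Next I would invoke Lemma \ref{K_X x D_i=b_i-2}, which gives $K_X\cdot D_i = b_i-2$, where $b_i$ is the integer determined by the smoothness relation $u_{i-1}+u_{i+1}=b_iu_i$ among consecutive ray generators. The remaining ingredient is the self-intersection formula for a torus-invariant prime divisor on a smooth complete toric surface, namely $D_i^2=-b_i$; this is exactly the content of \cite[Lemma 10.4.1]{Cox2011} under the same normalization of the $b_i$. I would then substitute $b_i=-D_i^2$ into $K_X\cdot D_i = b_i-2$ to obtain $K_X\cdot D_i = -D_i^2-2$, and combine with the displayed bilinearity identity to conclude
\[
(L+K_X)\cdot D_i = L\cdot D_i - D_i^2 - 2.
\]

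The only point demanding care—rather than a genuine obstacle—is matching the sign conventions. Both Lemma \ref{K_X x D_i=b_i-2} and \cite[Lemma 10.4.1]{Cox2011} must be read with the \emph{same} relation $u_{i-1}+u_{i+1}=b_iu_i$, so that the integer $b_i$ appearing in $K_X\cdot D_i=b_i-2$ is literally the negative of $D_i^2$; an accidental sign flip here would corrupt the formula. As a consistency check one can verify the result against adjunction: since each $D_i\cong\PP^1$ is a smooth rational curve, $K_X\cdot D_i + D_i^2 = 2g-2 = -2$, which is precisely the identity $K_X\cdot D_i = -D_i^2-2$ derived above, confirming that the sign bookkeeping is correct.
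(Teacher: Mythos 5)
Your proposal is correct and is exactly the paper's argument: the paper states the corollary "follows directly from \cite[Lemma 10.4.1]{Cox2011} and Lemma \ref{K_X x D_i=b_i-2}," i.e., bilinearity plus $K_X\cdot D_i=b_i-2$ and $D_i^2=-b_i$, just as you wrote. Your adjunction consistency check is a nice (if redundant) confirmation of the sign conventions.
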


We also know that the blowup of a toric variety corresponds to a subdivision of fan. Thus the number of generating rays of the fan corresponding to a toric surface increases after a blowup (\cite[Proposition 3.3.15]{Cox2011}). 

\begin{example}\label{Nef cone if Hirzebruch surface}
	Consider the Hirzebruch surface $\shF_r=\PP(\shO_{\PP^1}\oplus\shO_{\PP^1}(r))$, $r\ge 1$, whose fan $\Sigma$ given by the following figure
	\begin{figure}[H]\label{Hirzebruch fan}
		\begin{center}
			\begin{tikzpicture}[scale=1]		
			\coordinate  (5) at (0,0);
			\coordinate  (6) at (2,0);
			\coordinate  (7) at (0,2);
			\coordinate  (8) at (-1,2);
			\coordinate  (9) at (0,-2);
			
			\fill[pattern=dots, opacity=.5] (6)--(7)--(5)--(6);
			\fill[pattern=checkerboard light gray, fill opacity=0.5] (5)--(8)--(9)--(5);
			\fill[pattern=crosshatch dots gray, fill opacity=.5] (5)--(9)--(6)--(5);
			\fill[fill=lightgray, fill opacity=.5] (5)--(7)--(8)--(5);
			\node at (.6,.6) {$\sigma_1$};
			\node at (.6,-.6) {$\sigma_2$};
			\node at (-.25,0) {$\sigma_3$};
			\node at (-.3,1.5) {$\sigma_4$};
			\node [left] at (-.5,1) {$(-1,r)$};
			\draw[thick,->] (5)--(6);
			\draw[thick,->] (5)--(7);
			\draw[thick,->] (5)--(8);
			\draw[thick,->] (5)--(9);
			\filldraw (0,0) circle (1pt); 
			%		\filldraw (1,0) circle (1pt); 
			%		\filldraw (0,1) circle (1pt); 
			\filldraw (-.5,1) circle (1pt);
			\end{tikzpicture}
			\caption{The Hirzebruch fan} 
		\end{center}
	\end{figure}
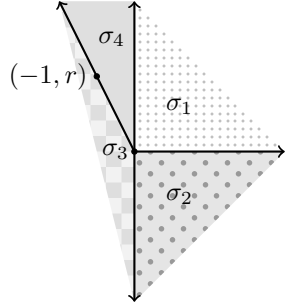
	The ray generators of $\Sigma$ are $v_1=(1,0)$, $v_2=(0,1)$, $v_3=(-1,r)$, and $v_4=(0,-1)$. Let the associated divisors be $D_1$, $D_2$, $D_3$, and $D_4$, respectively. By \cite[Proposition 4.1.2]{Cox2011}, 
	\begin{align*}
	0\sim \divisor(\chi^{e_1})&=\sum_{i=1}^4\langle e_1,v_i\rangle D_i= D_1-D_3\\
	0\sim \divisor(\chi^{e_2})&=\sum_{i=1}^4\langle e_2,v_i\rangle D_i=D_2+aD_3-D_4.
	\end{align*}
	Thus $D_3\sim D_1$, $D_4\sim D_2+aD_3$, and
	\[\Pic(\shF_r)\simeq \left\{aD_3+bD_4\vt a,b\in\ZZ\right\}.\]
	The maximal cones of $\Sigma$ are $\sigma_1$, $\sigma_2$, $\sigma_3$ and $\sigma_4$ as in Figure \ref{Hirzebruch fan}. Let $D=aD_3+bD_4$. We compute the $m_{\sigma_i}$ to be
	\[m_1=(-a,0),\hspace{2mm} m_2=(-a,b),\hspace{2mm} m_3=(rb,b),\hspace{2mm} m_4=(0,0).\]
	Then by \cite[Lemma 6.1.13]{Cox2011}, $D$ is very ample if and only if $a,b>0$. The nef cone of $\shF_r$ is given by
	\begin{figure}[H]
		\begin{center}	
			\begin{tikzpicture}[scale=1.5]
			% The coordinates
			\draw [thick,->](0,0) --(0,2);
			\draw [thick,->](0,0) --(2,0);
			\draw [dashed] (0,0)--(0,-1);
			\draw [dashed] (0,0)--(-1,0);
			\draw [fill] (0,0) circle [radius=0.025];
			\draw [fill] (1,0) circle [radius=0.025];
			\draw [fill] (0,1) circle [radius=0.025];
			\node [left] at (0,1) {$[D_4]$};				
			\node [below] at (1,0) {$[D_3]$};
			\fill [pattern=dots, fill opacity=0.2] (0,0)--(0,2)--(2,2)--(2,0)--(0,0);
			\end{tikzpicture}	
			\caption{The nef cone of $\shF_r$}	
		\end{center}
	\end{figure}
	
	By \cite[Lemma 10.4.1]{Cox2011}, we have $D_1^2=D_3^2=0$, $D_2^2=-a$, $D_4^2=a$, $D_1\cdot D_2=D_2\cdot D_3=D_3\cdot D_4=D_4\cdot D_1=1$, $D_1\cdot D_3=D_2\cdot D_4=0$. 
	
\end{example}

Finally, we will make use of the Hodge's Index Theorem:
\begin{lemma}[{\cite[Theorem V.1.9]{Hartshorne1977}}]\label{Hodge index theorem}
	Let $D$ be an ample divisor on a smooth projective surface $S$. If $E$ is a divisor such that $D\cdot E=0$, then $E^2\le 0$. The equality occurs if and only if $E$ is numerically equivalent to $0$.
\end{lemma}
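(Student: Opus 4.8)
The plan is to prove the statement in the sharpened form: if $D$ is ample and $D\cdot E=0$ with $E\not\equiv 0$, then $E^2<0$. Combined with the trivial observation that $E\equiv 0$ forces $E^2=D\cdot E=0$, this yields both the inequality $E^2\le 0$ and the characterization of the equality case. I would work throughout with numerical equivalence classes, so that the intersection form is a well-defined nondegenerate pairing on $\mathrm{Num}(S)$, and I am free to pass to $\QQ$-divisors since the self-intersection inequality is homogeneous of degree two. The one external tool I would bring in is Riemann--Roch for surfaces together with Serre duality, in the form
\[
h^0(nE)+h^0(K_S-nE)\ \ge\ \chi(\shO_S)+\tfrac12\bigl(n^2E^2-n\,E\cdot K_S\bigr),
\]
obtained from $\chi(\shO_S(nE))=\chi(\shO_S)+\tfrac12\,nE\cdot(nE-K_S)$ together with $h^2(nE)=h^0(K_S-nE)$ and $h^1\ge 0$.

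First I would establish $E^2\le 0$ by contradiction, assuming $E^2>0$. Since the right-hand side above grows like $\tfrac12 n^2E^2\to+\infty$, for all sufficiently large $n$ at least one of $nE$ or $K_S-nE$ is linearly equivalent to an effective divisor. If $nE$ is effective for some such $n$, then because $D$ is ample every irreducible curve meets $D$ positively, so any effective $F\sim nE$ satisfies $F\cdot D\ge 0$ with equality only when $F=0$; but $F\cdot D=n\,E\cdot D=0$, forcing $nE\equiv 0$ and hence $E^2=0$, a contradiction. Otherwise $K_S-nE$ is effective for all large $n$, and I would fix $m\gg 0$ so that $A:=E+mD$ is ample (possible since the ample cone is open and contains $D$ in its interior), and compute, using $E\cdot D=0$,
\[
0\ \le\ (K_S-nE)\cdot A\ =\ E\cdot K_S+m\,K_S\cdot D-n\,E^2 ,
\]
whose right-hand side tends to $-\infty$ as $n\to\infty$ because $E^2>0$. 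Either branch is absurd, so $E^2\le 0$.

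For the equality statement the trivial direction is clear, so I would assume $E^2=0$ and $E\not\equiv 0$ and seek a contradiction. Nondegeneracy of the intersection form provides a divisor $E'$ with $E\cdot E'\ne 0$, and I would correct it to be $D$-orthogonal by setting $E'':=(D^2)\,E'-(E'\cdot D)\,D$, so that $E''\cdot D=0$ while $E\cdot E''=D^2\,(E\cdot E')\ne 0$. Then for every integer $n$ the class $F_n:=nE+E''$ still satisfies $F_n\cdot D=0$, so by the inequality just proved $F_n^2\le 0$; but
\[
F_n^2\ =\ n^2E^2+2n\,(E\cdot E'')+(E'')^2\ =\ 2n\,(E\cdot E'')+(E'')^2,
\]
which becomes strictly positive once $n$ is chosen with the sign of $E\cdot E''$ and large in absolute value. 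This contradiction forces $E\equiv 0$.

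The main obstacle is the first case analysis. Because $E\cdot D=0$ the class $E$ lies on the boundary hyperplane $D^{\perp}$, so Riemann--Roch cannot be applied to $E$ against $D$ directly; the real content is to convert the cohomological growth into effectivity and then play the two resulting effective divisors against the ample classes $D$ and $E+mD$. Verifying that $E+mD$ is ample for $m\gg 0$ is the single step where I would either invoke openness of the ample cone or run the Nakai--Moishezon criterion; everything else is formal manipulation of the intersection pairing.
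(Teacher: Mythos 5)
Your proof is correct, but note that the paper itself offers no argument for this lemma at all --- it is imported wholesale as a citation to Hartshorne, Theorem V.1.9 --- so the fair benchmark is Hartshorne's proof, which yours essentially reproduces. The first half (Riemann--Roch plus Serre duality forcing one of $nE$, $K_S-nE$ to be effective, then playing the effective divisor against the ample classes $D$ and $E+mD$) is the standard argument; the two external inputs you flag are genuinely the only ones needed, namely that an effective divisor meeting an ample divisor in degree $0$ must be the zero divisor, and that $E+mD$ is ample for $m\gg 0$ (openness of the ample cone or Nakai--Moishezon, either suffices). One small logical point worth stating explicitly: in your second branch you only need $K_S-nE$ effective for infinitely many $n$, not for all large $n$, and that is exactly what the dichotomy delivers once the first branch has been excluded --- as written (``effective for all large $n$'') the claim is slightly stronger than what you have established, though the contradiction only requires the weaker version. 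Your equality case --- orthogonalizing a test divisor $E'$ to $E'':=(D^2)E'-(E'\cdot D)\,D$ and letting $F_n=nE+E''$ violate the inequality just proved --- is precisely Hartshorne's, and the appeal to ``nondegeneracy'' is harmless since the existence of $E'$ with $E\cdot E'\neq 0$ is literally the definition of $E\not\equiv 0$. It is worth noticing that this orthogonal-correction trick is the very same one the paper uses in its proof of Corollary \ref{Hodge inequality}, so your argument fits naturally alongside the paper's deduction of the Hodge inequality from this lemma.
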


\begin{corollary}[{\cite[Exercise V.1.9]{Hartshorne1977}}]\label{Hodge inequality}
	Let $D$ be an ample divisor on a smooth projective surface $S$ and $E$ an arbitrary divisor. Then 
	\[(D\cdot E)^2\ge D^2 E^2.\]
\end{corollary}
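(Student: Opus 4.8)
The plan is to deduce the inequality from the Hodge Index Theorem (Lemma~\ref{Hodge index theorem}) by manufacturing, out of $D$ and $E$, an auxiliary divisor that is numerically orthogonal to $D$. First I would set
\[
F = (D^2)\,E - (D\cdot E)\,D,
\]
a $\ZZ$-linear combination of $D$ and $E$ chosen precisely so that its pairing with $D$ cancels. Indeed, by bilinearity of the intersection form,
\[
D\cdot F = (D^2)(D\cdot E) - (D\cdot E)(D^2) = 0.
\]
Since $D$ is ample, Lemma~\ref{Hodge index theorem} applies to $F$ and gives $F^2 \le 0$.

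Next I would expand $F^2$ using bilinearity and symmetry of the intersection pairing:
\[
F^2 = (D^2)^2 E^2 - 2(D^2)(D\cdot E)^2 + (D\cdot E)^2(D^2) = (D^2)\bigl[(D^2)E^2 - (D\cdot E)^2\bigr].
\]
Combined with $F^2 \le 0$, this yields $(D^2)\bigl[(D^2)E^2 - (D\cdot E)^2\bigr]\le 0$. Because $D$ is ample, the Nakai--Moishezon criterion (applied with $Y=S$) forces $D^2>0$, so I may divide by this positive quantity to leave $(D^2)E^2 - (D\cdot E)^2 \le 0$, which rearranges to the desired $(D\cdot E)^2 \ge D^2 E^2$.

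The argument is a short formal computation, so I do not anticipate any serious obstacle. The only step that genuinely uses more than the linear algebra of the intersection form is the strict positivity $D^2>0$, which is precisely where ampleness of $D$ enters a second time (beyond licensing the appeal to the Hodge Index Theorem): without it one could not clear the factor $D^2$ while preserving the direction of the inequality.
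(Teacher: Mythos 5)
Your proof is correct and is essentially identical to the paper's: the paper also sets $H=(D^2)E-(D\cdot E)D$, checks $D\cdot H=0$, applies the Hodge Index Theorem (Lemma~\ref{Hodge index theorem}) to get $H^2\le 0$, expands, and divides by $D^2>0$. The only cosmetic difference is that you justify $D^2>0$ via Nakai--Moishezon, where the paper simply cites ampleness.
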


\begin{proof}
	Since $D$ is ample, $D^2>0$. Let $H=(D^2)E-(D\cdot E)D$. We have
	\[D\cdot H=(D^2)E\cdot D-(D\cdot E)D^2=0.\]
	Then by Lemma \ref{Hodge index theorem}, we must have $H^2\le 0$. In other words,
	\begin{align*}
		0\ge &\left((D^2)E-(D\cdot E)D\right)\cdot \left((D^2)E-(D\cdot E)D\right)\\
		= &D^4E^2-2(D\cdot E)^2(D^2)+D^2(D\cdot E)^2\\
		= &D^2\left(D^2E^2-(D\cdot E)^2\right).
	\end{align*}
	Since $D^2>0$, it follows that $(D\cdot E)^2\ge D^2E^2$.
\end{proof}

\section{Toric Surfaces and Lattice Polygons}
In this section, we review and prove some lemmas on lattice polygons that we will use to the proof of Proposition \ref{My Reider variant}.
\begin{lemma}[{\cite[Lemma 1]{Arkinstall1980}}]\label{pigeonhole}
	Every lattice polygon with at least 5 edges has at least an interior lattice point. 
\end{lemma}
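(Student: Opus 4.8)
The plan is to prove the contrapositive: I will show that every convex lattice polygon $P$ with no interior lattice point has at most four edges. The whole argument rests on one pigeonhole observation together with one elementary fact about convex polygons, which is presumably why this is named a pigeonhole lemma. The pigeonhole input is that the reduction map $\ZZ^2\to(\ZZ/2\ZZ)^2$ takes only four values, so among any five lattice points two, say $p$ and $q$, are congruent modulo $2$, and then $\tfrac12(p+q)$ is again a lattice point. The convexity input is that if two boundary points of $P$ do not lie on a single common edge, then the open segment joining them lies in the interior of $P$. Combining the two: whenever two lattice points of $P$ are congruent modulo $2$ and do not lie on a common edge, their midpoint is an interior lattice point.

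First I would apply this to the vertices alone. If $P$ has at least five vertices, two of them are congruent modulo $2$; if these two are non-adjacent they share no edge, so their midpoint is an interior lattice point and we are done. Hence the only way to avoid an interior point is that every pair of vertices congruent modulo $2$ is a pair of adjacent vertices. In particular (once there are at least four vertices) each of the four residue classes is hit by at most two vertices, and by two only when they are consecutive, since three vertices of an $n$-gon with $n\ge 4$ can never be pairwise adjacent.

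The remaining case, where all modulo-$2$ coincidences among the vertices occur along edges, is where the real work lies, and I expect it to be the main obstacle: the vertices alone no longer suffice, and one must bring in the lattice points on the edges. If $v_i$ and $v_{i+1}$ are adjacent and congruent modulo $2$, then the edge $v_iv_{i+1}$ carries the lattice point $m=\tfrac12(v_i+v_{i+1})$ in its relative interior. This $m$ shares no edge with any vertex $w$ not lying on $v_iv_{i+1}$, so if $m\equiv w\pmod 2$ for even one of the at least three such vertices we again produce an interior point. Pushing this, avoiding an interior point forces the residue class of $m$ to be avoided by every other vertex; a short bookkeeping on the class sizes (comparing $2d+s\ge 5$ against $d+s\le 4$, where $d$ and $s$ count the doubled and the singleton residue classes) then forces a second such edge, hence a second relative-interior edge point $m'$ on a different edge. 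Since $m$ and $m'$ do not lie on a common edge, and neither does $m$ (or $m'$) with any singleton-class vertex, the problem reduces to a finite check that these boundary lattice points together with the singleton vertices cannot all be arranged to dodge every admissible pairing. Carrying out that finite verification carefully is the crux.

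As an alternative route to close the hard case, I would invoke the classification of convex lattice polygons with empty interior according to lattice width: either the width is $1$, in which case $P$ lies in a strip between two adjacent parallel lattice lines and is the convex hull of two parallel lattice segments, hence a triangle or a quadrilateral; or the width is at least $2$, in which case $P$ is unimodularly equivalent to $\mathrm{conv}\{(0,0),(2,0),(0,2)\}$. Each possibility has at most four edges, which is exactly the contrapositive statement, and the width‑$1$ subcase is immediate while the width‑$\ge 2$ subcase is the one demanding the genuine argument.
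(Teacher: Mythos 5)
The paper gives no proof of this lemma at all --- it is imported verbatim from \cite[Lemma 1]{Arkinstall1980} --- so your attempt has to stand on its own, and it does not: the case you yourself flag as ``the crux'' is left genuinely open, and the bookkeeping you propose to reach it is incorrect. From $2d+s\ge 5$ and $d+s\le 4$ one gets only $d\ge 1$, not $d\ge 2$; a pentagon can have $d=1$, $s=3$ (one adjacent congruent pair, three singleton vertices), so no ``second such edge'' is forced precisely in the pentagon case, which is the decisive one (the paper's application, Lemma \ref{Volume of polygon >=9}, reduces to pentagons). In that configuration all four residue classes are occupied by vertices, and the midpoint $m$ of the doubled edge $v_1v_2$ may perfectly well lie in the class of $v_1,v_2$ itself; nothing at the level of residue classes is contradictory about this, so no ``finite check'' on pairings can close it. What is missing is a further geometric idea, for instance a descent: pass to $Q=\conv(m,v_2,v_3,v_4,v_5)$, again a lattice pentagon, with $Q^0\subset P^0$ and with $m\equiv v_2\pmod 2$ adjacent; iterate. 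This terminates because the midpoint of the distinguished edge is a lattice point only while that edge has even lattice length, and the length halves at each step; when the iteration stops, the current midpoint has left the doubled class, hence is congruent to one of $v_3,v_4,v_5$, and their average is an interior lattice point of the current pentagon, hence of $P$. The same gap recurs for $n\ge 6$: your two midpoints $m$ and $m'$ lie in two \emph{different} doubled classes, so they need not be congruent to each other, and again no contradiction appears without some descent-type argument.

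Your fallback route is not a repair either. The classification of lattice polygons with no interior lattice points --- lattice width $1$, or unimodularly equivalent to $\conv\{(0,0),(2,0),(0,2)\}$ --- is a strictly stronger theorem than the lemma, and as you concede, its width-$\ge 2$ case ``is the one demanding the genuine argument.'' Invoking it as a black box merely replaces the paper's citation of Arkinstall by a citation of a harder result, and proving it would require exactly the work you have deferred. As written, then, the proposal is a correct opening (the mod-$2$ midpoint trick and the convexity fact about boundary points not sharing an edge are both right and are the natural start) followed by an outline whose decisive step is absent.
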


\begin{lemma}\label{second pigeonhole}
	Let $v_1,\ldots,v_5$ be lattice points such that no three points are collinear. Then there exists a lattice point in $\conv(v_1,\ldots,v_5)\backslash\{v_1,\ldots,v_5\}$.
\end{lemma}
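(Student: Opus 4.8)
The plan is to exploit a parity pigeonhole argument on the coordinates of the five points. Each lattice point $v_i\in\ZZ^2$ has a well-defined parity class, namely its image under the reduction map $\ZZ^2\to(\ZZ/2\ZZ)^2$. Since $(\ZZ/2\ZZ)^2$ has only four elements but we are given five points $v_1,\ldots,v_5$, the pigeonhole principle guarantees two indices $i\ne j$ with $v_i$ and $v_j$ lying in the same parity class. This is the single idea that drives the whole argument; the rest is verification.

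Once such a pair is fixed, I would consider the midpoint
\[
m=\tfrac{1}{2}(v_i+v_j).
\]
Because $v_i$ and $v_j$ agree coordinate-by-coordinate modulo $2$, each coordinate of $v_i+v_j$ is even, so $m\in\ZZ^2$ is again a lattice point. Moreover $m$ lies on the segment joining $v_i$ and $v_j$, hence $m\in\conv(v_1,\ldots,v_5)$. It therefore only remains to check that $m$ is not one of the five given vertices, which is exactly where the non-collinearity hypothesis enters.

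For that last point I would argue by contradiction. Suppose $m=v_k$ for some $k$. The points $v_1,\ldots,v_5$ are distinct (a repeated point together with any other would already force a degenerate collinear triple), so $m$, being the midpoint of the distinct points $v_i$ and $v_j$, is strictly between them and thus differs from both; hence $k\notin\{i,j\}$. But then $v_k=m$ lies in the interior of the segment $[v_i,v_j]$, so $v_i,v_j,v_k$ are three collinear points, contradicting the hypothesis. Therefore $m\ne v_\ell$ for every $\ell$, and $m\in\conv(v_1,\ldots,v_5)\setminus\{v_1,\ldots,v_5\}$ is the desired lattice point.

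I do not expect any serious obstacle here: the construction of $m$ is immediate and the only thing requiring care is the bookkeeping in the final paragraph, where one must confirm that $m$ coincides with none of the vertices. The non-collinearity assumption is tailored precisely to rule this out, so the argument closes cleanly once that case distinction is made.
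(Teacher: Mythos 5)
Your proof is correct and follows exactly the paper's argument: the parity pigeonhole on $(\ZZ/2\ZZ)^2$, the lattice midpoint $m=\tfrac{1}{2}(v_i+v_j)$, and the non-collinearity hypothesis ruling out $m\in\{v_1,\ldots,v_5\}$. You merely spell out the final verification (distinctness of the points and the case analysis on $m=v_k$) more explicitly than the paper does, which is a welcome clarification rather than a deviation.
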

\begin{proof}
	Let the coordinates of $v_i$ be $(x_i,y_i)$ for $i=1,\ldots, 5$. By the pigeonhole principle, there must be $i\neq j$ such that $x_i\equiv x_j\pmod 2$ and $y_i\equiv y_j\pmod 2$. Then the midpoint $m$ of $v_iv_j$ is a lattice point. Since no three points in $\{v_1,\ldots,v_5\}$ are collinear, it follows that $m\in \conv(v_1,\ldots,v_5)\backslash\{v_1,\ldots,v_5\}$.
\end{proof}

As a consequence, we obtain:
\begin{lemma}\label{Volume of polygon >=9}
	Let $P$ be a lattice polygon that has at least $5$ vertices and assume that one of its edges has lattice length $4$. Then $\Vol(P)\ge 9$.
\end{lemma}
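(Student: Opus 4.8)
The plan is to reduce the statement to a \emph{strict} area estimate and then exploit integrality of the normalized volume. Recall that for a lattice polygon one has $\Vol(P)=2A(P)$, where $A(P)$ denotes the Euclidean area, and that this quantity is a positive integer. Hence it suffices to prove the strict inequality $A(P)>4$: this forces $\Vol(P)>8$, and since $\Vol(P)\in\ZZ$ we conclude $\Vol(P)\ge 9$. After an affine unimodular change of coordinates I may assume that the edge $E$ of lattice length $4$ is the segment from $(0,0)$ to $(4,0)$ and that $P$ lies in the closed upper half-plane $\{y\ge 0\}$; in particular $(0,0)$ and $(4,0)$ are vertices of $P$ and every other vertex has $y$-coordinate at least $1$.

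First I would locate a vertex of height at least $2$. Since $E$ is a single edge of $P$, the only vertices of $P$ on the line $y=0$ are its endpoints $(0,0)$ and $(4,0)$, so the remaining (at least three) vertices all satisfy $y\ge 1$. Because $P$ is convex, its intersection with the line $y=1$ is a segment, whose endpoints are the only points of that line that can be vertices of $P$; thus at most two vertices lie on $y=1$. Consequently at least one vertex $T=(t,h)$ has height $h\ge 2$.

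Next I would bound the area from below by a triangle. As $(0,0)$, $(4,0)$ and $T$ all lie in the convex set $P$, the triangle $\Delta$ they span is contained in $P$, and
\[
A(P)\ \ge\ A(\Delta)\ =\ \tfrac12\cdot 4\cdot h\ =\ 2h\ \ge\ 4 .
\]
This already yields $\Vol(P)\ge 8$, so the crux is to upgrade it to a strict inequality. The hard part is ruling out equality: if $A(P)=4$ then $h=2$ and $A(P)=A(\Delta)$, and since $\Delta\subseteq P$ are convex sets of equal area this forces $P=\Delta$. But then $P$ is a triangle with only three vertices, contradicting the hypothesis that $P$ has at least five vertices. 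Therefore $A(P)>4$, whence $\Vol(P)>8$ and finally $\Vol(P)\ge 9$.

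I expect the only delicate point to be this last strictness argument: the hypotheses on the number of vertices and on the edge length enter precisely there, and it is exactly what prevents the weaker bound $\Vol(P)\ge 8$ (realized by the lattice triangle with vertices $(0,0),(4,0),(2,2)$) from being the end of the story. Note that this containment-of-triangle route does not actually require Lemma \ref{second pigeonhole}; if one preferred a Pick-formula argument instead, Lemma \ref{pigeonhole} would supply an interior lattice point and Lemma \ref{second pigeonhole} could be used to produce extra boundary points, but the argument above seems the most economical.
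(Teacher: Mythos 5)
Your proof is correct, but it takes a genuinely different route from the paper's. The paper reduces to the case of a pentagon, uses Lemma \ref{pigeonhole} to produce an interior lattice point of an auxiliary sub-polygon, runs a case analysis on where that point sits relative to the segments $v_3v_1, v_3y_1,\ldots$, invokes Lemma \ref{second pigeonhole} to manufacture one more lattice point, and finally counts boundary and interior points via Pick's theorem. You instead normalize the length-$4$ edge to $[(0,0),(4,0)]$, observe that at most two vertices can lie on the line $y=1$ (they must be endpoints of the segment $P\cap\{y=1\}$), so the $\ge 5$ vertices force a vertex $T$ of height $h\ge 2$; the inscribed triangle $\Delta=\conv((0,0),(4,0),T)$ gives $A(P)\ge 2h\ge 4$, equality would force $P=\Delta$ (contradicting the vertex count), and integrality of $\Vol(P)=2A(P)$ upgrades $\Vol(P)>8$ to $\Vol(P)\ge 9$. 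Your argument is more economical and self-contained: it needs neither of the two pigeonhole lemmas nor Pick's theorem (only the shoelace fact that $2A\in\ZZ$), and it avoids the paper's delicate case analysis; it also works verbatim for edges of lattice length $\ge 4$. What the paper's approach buys is explicit lattice-point information (e.g.\ $\Vol(P)\ge 10$ when the extra point is interior), and it reuses machinery (Lemma \ref{pigeonhole}, Pick) that the paper needs elsewhere anyway, e.g.\ in Lemma \ref{L^2>=LD+4}. Two small polish points: the step ``nested compact convex sets of equal area coincide'' deserves a one-line justification (if $p\in P\setminus\Delta$, then $\conv(\Delta\cup\{p\})\subseteq P$ strictly increases the area since $\Delta$ has nonempty interior); alternatively, since $P$ has a vertex $w\notin\{(0,0),(4,0),T\}$ and extreme points of $P$ lying in $\Delta$ must be vertices of $\Delta$, you get $w\notin\Delta$ and hence strict inequality directly, without the equality-case discussion.
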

\begin{proof}
	It suffices to prove the lemma when $P$ is a lattice pentagon. Let $P=\conv(v_1,\ldots,v_5)$, where $v_1,\ldots,v_5$ are ordered clockwise in $M$. Without loss of generality suppose that the lattice length of the edge joining $v_1$ and $v_5$ is $4$; i.e., there are $3$ other lattice points $y_1$, $y_2$, $y_3$ in between $v_1$ and $v_5$.
	
	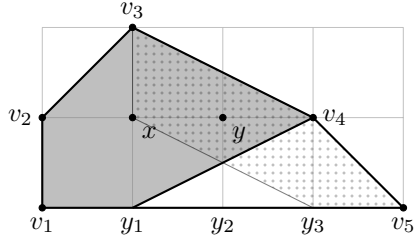
\begin{figure}[H]\label{pentagon}
		\begin{center}
			\begin{tikzpicture}[scale=1.2]		
			%\filldraw (7,-2) circle (1pt); 
			\draw [help lines, opacity=.5] (0,0) grid (4,2);
			\coordinate  (1) at (0,0);
			\coordinate  (2) at (0,1);
			\coordinate  (3) at (1,2);
			\coordinate  (4) at (3,1);
			\coordinate  (5) at (4,0);
			\coordinate  (6) at (1,1);
			\coordinate  (7) at (1,0);
			\coordinate  (8) at (2,0);
			\coordinate  (9) at (3,0);
			\coordinate  (10) at (2,1);	
			\fill [color=gray, opacity=.5] (1)--(2)--(3)--(4)--(7)--(1);
			\filldraw [pattern=dots, opacity=.5] (3)--(6)--(9)--(5)--(4)--(3);
			\draw [thick] (1)--(2)--(3)--(4)--(5)--(1);
			\draw [thick] (4)--(7);
			
			\node [below] at (1) {$v_1$};
			\node [left] at (2) {$v_2$};
			\node [above] at (3) {$v_3$};
			\node [right] at (4) {$v_4$};
			\node [below] at (5) {$v_5$};
			\node [below right] at (6) {$x$};
			\node [below right] at (10) {$y$};
			\node [below] at (7) {$y_1$};
			\node [below] at (8) {$y_2$};
			\node [below] at (9) {$y_3$};
			\filldraw (1) circle (1pt); 
			\filldraw (2) circle (1pt); 
			\filldraw (3) circle (1pt); 
			\filldraw (4) circle (1pt); 
			\filldraw (5) circle (1pt); 
			\filldraw (6) circle (1pt); 
			\filldraw (10) circle (1pt);
			\end{tikzpicture}
			\caption{A lattice pentagon that has an edge whose lattice length is $4$} 
		\end{center}
	\end{figure} 
	Consider the polytope $Q=\conv(v_1,v_2,v_3,v_4,y_1)$. Then by Lemma \ref{pigeonhole}, there must be a lattice point $x$ in the interior of $Q$. Then $x$ lies in at most one of the segments $v_1v_3$, $y_1v_3$, $y_2v_3$, $y_3v_3$, $v_5v_3$. If $x$ lies in $v_1v_3$ or if $x$ does not lie in any mentioned segments, consider the set of $5$ points $\{x,v_3,v_4,v_5,y_1\}$. By Lemma \ref{second pigeonhole}, there must be another lattice point $y$ in $P$ that is not the same as the points listed before. If $y\in \partial P$, then $|\partial P\cap M|\ge 9$ and $|P^0\cap M|\ge 1$. By Pick's theorem \cite{Pick1899},
	\[\Vol(P)=|\partial P\cap M|+2|P^0\cap M|-2\ge 9.\]
	If $y\in P^0$, then $|\partial P\cap M|\ge 8$ and $|P^0\cap M|\ge 2$. Again, by Pick's theorem,
	\[\Vol(P)=|\partial P\cap M|+2|P^0\cap M|-2\ge 10.\] 
	If $x$ lies in $v_3y_1$ or $v_3y_2$ then we get such a point $y$ from $\conv(x,v_3,v_4,v_5,y_3)$. If $x$ lies in $v_3y_3$ or $v_3v_5$ then we get $y$ from $\conv(v_1,v_2,v_3,x,y_2)$. The same argument follows and we proved the lemma. 
\end{proof}

We will also need the following lemmas for the proof of Proposition \ref{My Reider variant}.
\begin{lemma}\label{L^2>=LD+4}
	Let $L$ be an ample line bundle over a smooth projective toric surface $X$. Let $\Sigma$ be the fan of $X$. Suppose that $\Sigma$ has $n\ge 5$ rays $\rho_1,\ldots, \rho_n$. Then for any integer $1\le i\le n$,
	\[L^2\ge L\cdot D_{\rho_i}+4.\]
	%In other words, for any smooth lattice polygon $P$ with at least $5$ vertices and $E$ any edge of $P$, 
	%\[\Vol(P)\ge 3+|E\cap M|.\]
\end{lemma}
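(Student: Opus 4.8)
The plan is to translate the entire statement into the language of the lattice polygon $P$ associated to $L$, and then combine Pick's theorem with the pigeonhole result of Lemma \ref{pigeonhole}. Since $X$ is a smooth projective toric surface, $L$ is very ample and corresponds to a lattice polygon $P\subset M_\RR$ whose $n$ edges $E_1,\ldots,E_n$ correspond bijectively to the rays $\rho_1,\ldots,\rho_n$ (equivalently to the divisors $D_{\rho_1},\ldots,D_{\rho_n}$). Two identities drive the argument. First, by Lemma \ref{lattice length}, the intersection number $L\cdot D_{\rho_i}$ equals the lattice length $d:=|E_i\cap M|-1$ of the corresponding edge. Second, the self-intersection is the normalized area of the polygon, $L^2=\Vol(P)$, which by Pick's theorem equals $|\partial P\cap M|+2|P^0\cap M|-2$. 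Thus the inequality to prove is exactly $\Vol(P)\ge d+4$.

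First I would bound the number of boundary lattice points from below. The edge $E_i$ contains exactly $d+1$ lattice points, namely its two endpoints (which are vertices of $P$) together with $d-1$ further lattice points in its relative interior. Since $\Sigma$ has $n\ge 5$ rays, $P$ has $n\ge 5$ vertices, so besides the two endpoints of $E_i$ there are at least $n-2\ge 3$ other vertices of $P$. These are lattice points of $\partial P$ that are distinct from the $d+1$ points on $E_i$ (a vertex other than the two endpoints cannot lie in the relative interior of the edge $E_i$), hence
\[|\partial P\cap M|\ge (d+1)+(n-2)\ge d+4.\]

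Next I would control the interior. Because $P$ has at least $5$ edges, Lemma \ref{pigeonhole} guarantees an interior lattice point, so $|P^0\cap M|\ge 1$. Substituting both bounds into Pick's theorem yields
\[L^2=\Vol(P)=|\partial P\cap M|+2|P^0\cap M|-2\ge (d+4)+2-2=d+4=L\cdot D_{\rho_i}+4,\]
which is the claim.

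I expect no genuine obstacle here; the only points requiring care are bookkeeping rather than a real difficulty. One must check that the $d+1$ lattice points on $E_i$ and the remaining $\ge 3$ vertices are counted without overlap (they are, by the relative-interior remark above), and one must invoke the standard identification $L^2=\Vol(P)$ of the self-intersection with the normalized area of $P$. Everything else is a direct application of Lemma \ref{lattice length}, Lemma \ref{pigeonhole}, and Pick's theorem.
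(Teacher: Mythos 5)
Your proof is correct and follows essentially the same route as the paper: translate $L^2$ and $L\cdot D_{\rho_i}$ into data of the lattice polygon $P$, apply Pick's theorem, and use Lemma \ref{pigeonhole} to produce an interior lattice point. The only cosmetic difference is in bounding $|\partial P\cap M|$: you count the $d+1$ points of $E_i$ plus the $n-2$ remaining vertices directly, whereas the paper uses the identity $|\partial P\cap M|=\sum_j L\cdot D_{\rho_j}$ together with ampleness ($L\cdot D_{\rho_j}\ge 1$), both yielding the same bound $d+n-1$.
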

\begin{proof}
	Let $P$ be the polytope associated to $L$. By Pick's theorem (\cite{Pick1899}) and since $L$ is ample so that $L\cdot D_{\rho_i}\ge 1$ for all $i$,
	\begin{equation}\label{Pick}
		\vol(P)=\frac{L^2}{2}=\frac{|\partial P\cap M|}{2}+|P^0\cap M|-1,
	\end{equation}
	where $\partial P$ and $P^0$ are the sets of all boundary points and interior points of $P$, respectively. By Lemma \ref{lattice length},
	\begin{equation}\label{perimeter of polytope}
		|\partial P\cap M|=\sum_{j=1}^nL\cdot D_{\rho_j}.
	\end{equation}
	Hence, combining \eqref{Pick} and \eqref{perimeter of polytope} gives
	\[L^2=\sum_{j=1}^n L\cdot D_{\rho_j}+2|P^0\cap M|-2
	\ge L\cdot D_{\rho_i}+(n-1)+2|P^0\cap M|-2.\]
	Since $n\ge 5$, by Lemma \ref{pigeonhole}, $|P^0\cap M|\ge 1$. Therefore,
	\[L^2\ge L\cdot D_{\rho_i}+4.\]
\end{proof}

\section{A Reider-type Result for Toric Surfaces}
We will devote this section to prove Proposition \ref{My Reider variant}. First of all, it is true for $X\cong \PP^1\times \PP^1$.
\begin{lemma}\label{P1xP1}
	Proposition \ref{My Reider variant} holds for $X\cong\PP^1\times\PP^1$.
\end{lemma}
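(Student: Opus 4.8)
The plan is to verify Proposition \ref{My Reider variant} directly on $X\cong\PP^1\times\PP^1$ by parametrizing all ample line bundles and computing the relevant intersection numbers explicitly. The Picard group of $\PP^1\times\PP^1$ is $\ZZ^2$, generated by the two rulings $D_1$ and $D_2$, with intersection numbers $D_1^2=D_2^2=0$ and $D_1\cdot D_2=1$. An ample line bundle can be written as $L=aD_1+bD_2$ with $a,b>0$, and the canonical divisor is $K_X=-2D_1-2D_2$, so that $K_X+L=(a-2)D_1+(b-2)D_2$. The torus-invariant prime divisors are the four rays of the fan, which fall into two classes numerically equivalent to $D_1$ or $D_2$, each with self-intersection $0$.

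First I would observe that, because every effective torus-invariant divisor $D$ is a nonnegative combination $D=\alpha D_1+\beta D_2$ with $\alpha,\beta\ge 0$ (in fact a sum of the four prime torus-invariant divisors), we have $D\cdot L=\alpha b+\beta a$ and $D^2=2\alpha\beta$. In particular $D^2\ge 0$ always, so the cases $D^2=-1$ and $D^2=1$ in the proposition never arise on $\PP^1\times\PP^1$; only $D^2=0$ (when $\alpha=0$ or $\beta=0$) is possible for the relevant small values. This immediately collapses the list of cases to those with $D^2=0$, which is the key simplification.

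Next I would handle the two parts separately. For part (1), $K_X+L$ fails to be basepoint free precisely when it fails to be nef, i.e.\ when $a-2<0$ or $b-2<0$, which by symmetry means $a=1$ or $b=1$. I would check that this is equivalent to the existence of a prime torus-invariant divisor $D$ (one numerically equivalent to $D_1$ or $D_2$) with $D\cdot L=1$ and $D^2=0$: indeed $D_2\cdot L=a$ and $D_1\cdot L=b$, so such a $D$ exists iff $a=1$ or $b=1$. For part (2), $K_X+L$ fails to be ample iff $a-2\le 0$ or $b-2\le 0$, i.e.\ $a\in\{1,2\}$ or $b\in\{1,2\}$, and I would match these against the cases $D\cdot L\in\{1,2\}$ with $D^2=0$ realized by $D_1$ or $D_2$. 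Finally, for the $L^2\ge 10$ refinement I would note $L^2=2ab$, so $L^2\ge 10$ means $ab\ge 5$, and then verify that the truncated list (dropping the $D^2=\pm1$ cases, which are already impossible here) still captures exactly the failure of ampleness; since those cases never occur on $\PP^1\times\PP^1$ regardless of $L^2$, both formulations of part (2) are automatically equivalent in this case.

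The main obstacle I anticipate is purely bookkeeping rather than conceptual: one must be careful to confirm that the \emph{only} effective torus-invariant divisors that realize the borderline intersection values are the prime divisors $D_1,D_2$ (or their translates), and that no reducible combination $\alpha D_1+\beta D_2$ with both $\alpha,\beta>0$ sneaks in with a small $D\cdot L$ and $D^2=0$ — which it cannot, since both coordinates positive forces $D^2=2\alpha\beta>0$. Once this is pinned down the equivalences are immediate from the linear inequalities defining nefness and ampleness of $K_X+L$ in terms of $a$ and $b$.
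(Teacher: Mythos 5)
Your proof is correct, but it takes a more explicit route than the paper's. The paper never writes $L$ in coordinates: it observes from \cite[Lemma 10.4.1]{Cox2011} that $D_\rho^2=0$ for every ray $\rho$ of the fan of $\PP^1\times\PP^1$, and then argues ray by ray --- if $K_X+L$ is not basepoint free (equivalently, not nef, by the toric criterion), some ray satisfies $(K_X+L)\cdot D_\rho<0$, and the formula $(L+K_X)\cdot D_\rho=L\cdot D_\rho-D_\rho^2-2$ of Lemma \ref{K_X x D_i=b_i-2} then forces $L\cdot D_\rho=1$; the non-ample case similarly forces $L\cdot D_\rho\in\{1,2\}$. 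In particular, the paper's lemma only establishes the necessity direction (sufficiency is handled once for all cases in the main proof via Corollary \ref{(L+KX)D=LD-D^2-2}), and it only ever exhibits prime invariant divisors. Your parametrization $L=aD_1+bD_2$, $K_X+L=(a-2)D_1+(b-2)D_2$ buys two things the paper leaves implicit: a genuine two-way equivalence inside the lemma itself, and the observation that an arbitrary effective torus-invariant divisor $\alpha D_1+\beta D_2$ has $D^2=2\alpha\beta$ even and nonnegative, so the cases $D^2=-1$ and $D^2=1$ (and hence the $L^2\ge 10$ refinement) are vacuous on $\PP^1\times\PP^1$. The trade-off is that your argument is tied to the coordinate description of $\PP^1\times\PP^1$, whereas the paper's ray-by-ray computation via the adjunction-type formula is exactly the template it reuses for the Hirzebruch surfaces and for the general blowup case, which is why the paper's version is structured the way it is.
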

\begin{proof}
	Let $\Sigma$ be the fan of $X=\PP^1\times \PP^1$ as follows.
	\begin{figure}[H]
		\begin{center}
			\begin{tikzpicture}[scale=2]   
			\fill[pattern=dots, opacity=.5] (0,0)--(1,0)--(1,1)--(0,1)--(0,0);
			\fill[pattern=checkerboard light gray, fill opacity=0.5] (0,0)--(0,1)--(-1,1)--(-1,0)--(0,0);
			\fill[pattern=crosshatch dots gray, fill opacity=.5] (0,0)--(-1,0)--(-1,-1)--(0,-1)--(0,0);
			\fill[fill=lightgray, fill opacity=.5] (0,0)--(0,-1)--(1,-1)--(1,0)--(0,0);
			\draw[thick,->] (0,0)--(0,1);
			\draw[thick,->] (0,0)--(1,0);
			\draw[thick,->] (0,0)--(-1,0);
			\draw[thick,->] (0,0)--(0,-1);
			\node at (.5,.5) {$\sigma_1$};
			\node at (.5,-.5) {$\sigma_2$};
			\node at (-.5,-.5) {$\sigma_3$};
			\node at (-.5,.5) {$\sigma_4$};
			\end{tikzpicture}
			\caption{The fan of $\PP^1\times\PP^1$}
		\end{center}
	\end{figure}
	By \cite[Lemma 10.4.1]{Cox2011}, $D_{\rho}^2=0$ for all $\rho\in\Sigma(1)$. Thus, we need to show that there exists $\rho$ such that $L\cdot D_{\rho}=1$ in the first part and $L\cdot D_{\rho}\le 2$ in the second part.
	
	For any ample bundle $L$ on $X$, if $L+K_X$ is not basepoint free, then there exists $\rho\in \Sigma(1)$ such that $(L+K_X)\cdot D_{\rho}< 0$. Then By lemma \ref{K_X x D_i=b_i-2}, 
	\[(L+K_X)\cdot D_{\rho}=L\cdot D_{\rho}-D_{\rho}^2-2<0.\]
	This implies $0<L\cdot D_{\rho}< D_{\rho}^2+2=2$, so that $L\cdot D_{\rho}=1$.
	
	Now suppose that $L+K_X$ is not ample and $(L+K_X)\cdot D_{\rho}\le 0$. Then By lemma \ref{K_X x D_i=b_i-2},
	\[(L+K_X)\cdot D_{\rho}=L\cdot D_{\rho}-D_{\rho}^2-2\le 0.\]
	This implies $1\le L\cdot D_{\rho}\le  D_{\rho}^2+2=2$. Hence, either $L\cdot D_{\rho}=1$ and $D_{\rho}^2=0$ or $L\cdot D_{\rho}=2$ and $D_{\rho}^2=0$. The conclusion follows.
\end{proof}
Secondly, we show that Proposition \ref{My Reider variant} holds for Hirzebruch surfaces.
\begin{lemma}\label{Hirzebruch}
	Proposition \ref{My Reider variant} holds for $X\cong \shF_a$, $a\ge 1$.
\end{lemma}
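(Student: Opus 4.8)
The plan is to verify Proposition \ref{My Reider variant} directly for each Hirzebruch surface $\shF_a$ by exploiting the explicit intersection theory computed in Example \ref{Nef cone if Hirzebruch surface}. Recall from there that $\Pic(\shF_a)$ is freely generated by $[D_3]$ and $[D_4]$, that $D$ is (very) ample iff it is of the form $aD_3+bD_4$ with $a,b>0$, and that the intersection products are $D_3^2=0$, $D_4^2=a$, $D_3\cdot D_4=1$. Since $\shF_a$ has exactly four torus-invariant curves $D_1,\dots,D_4$ whose classes reduce to $D_1\sim D_3$ and $D_2\sim D_4-aD_3$, I would first write any ample $L$ as $L=\alpha D_3+\beta D_4$ with $\alpha,\beta\ge 1$ and tabulate the four intersection numbers $L\cdot D_i$ and the four self-intersections $D_i^2$ in terms of $\alpha$, $\beta$, and $a$.

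Next I would reduce both statements to a numerical check on the four rays using Corollary \ref{(L+KX)D=LD-D^2-2}, exactly as in the proof of Lemma \ref{P1xP1}. For part (1), $|K_X+L|$ fails to be basepoint free precisely when $(L+K_X)\cdot D_i<0$ for some $i$, i.e.\ $L\cdot D_i-D_i^2-2<0$; since $L$ is ample one has $L\cdot D_i\ge 1$, and the only integer solution compatible with the allowed self-intersections forces $L\cdot D_i=1$ and $D_i^2=0$. For part (2), nonampleness of $K_X+L$ is detected by $(L+K_X)\cdot D_i\le 0$, giving $1\le L\cdot D_i\le D_i^2+2$; running through the four possible values of $D_i^2$ occurring on $\shF_a$ yields exactly the listed cases. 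I would organize this as a short case analysis over which of the four curves is the offending $D_i$, reading off $L\cdot D_i$ from the tabulation.

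The one genuinely non-formal step is the final asymptotic refinement: showing that when $L^2\ge 10$ the two ``extra'' configurations, namely $D\cdot L=2$ with $D^2=0$ being replaced, and especially $D\cdot L=3$ with $D^2=1$, cannot arise. Here I expect the main obstacle to be ruling out the case $D^2=1$ (which would force $a=1$, i.e.\ $\shF_1$, since on $\shF_a$ an irreducible curve has self-intersection in $\{-a,0,a\}$), and then checking that the associated ample $L$ necessarily has $L^2<10$. The plan is to combine the explicit formula $L^2=2\alpha\beta+a\beta^2$ with the constraint coming from $(L+K_X)\cdot D=0$ to bound $\alpha$ and $\beta$, thereby showing $L^2\le 9$ whenever the excluded configurations occur; the Hodge inequality of Corollary \ref{Hodge inequality} may be invoked as a cross-check but should not be strictly necessary given the explicit Picard data.
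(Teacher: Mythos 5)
Your overall framework (explicit Picard data on $\shF_a$, reduction to the rays via Corollary \ref{(L+KX)D=LD-D^2-2}, toric nef/ample criteria) is the right setting, but the central step of your second paragraph contains a genuine gap: you claim that the \emph{offending} ray $D_i$ --- the one with $(L+K_X)\cdot D_i<0$ (resp.\ $\le 0$) --- must itself realize the numerics listed in Proposition \ref{My Reider variant}. This is false on $\shF_a$ because of the curve $D_4$ with $D_4^2=a>0$. Concretely, take $L=D_3+D_4$, which is ample; then $K_X+L\sim (a-1)D_3-D_4$, so
\[(L+K_X)\cdot D_4=(a-1)-a=-1<0,\]
while $L\cdot D_4=a+1$ and $D_4^2=a$. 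For every $a\ge 1$ the pair $(L\cdot D_4,D_4^2)=(a+1,a)$ is \emph{not} of the form $(1,0)$ required by part (1), so the inequality $1\le L\cdot D_i\le D_i^2+1$ simply does not force the conclusion when $D_i^2=a$; the same failure occurs in part (2), where the offending $D_4$ can have numerics like $(a+1,a)$ or $(a+2,a)$, none of which appear in the list. The proposition is existential: when the offending ray is $D_4$, the witness must be a \emph{different} divisor (here $D_3$, since $L\cdot D_3=1$ and $D_3^2=0$), and your argument as written never produces it. The paper's proof avoids this exactly by inverting the logic: it first classifies which ample $L=\alpha D_3+\beta D_4$ have non-nef (resp.\ non-ample) adjoint --- e.g.\ for $a\ge 2$ non-nefness forces $\beta=1$ --- and then exhibits an explicit witness ($D_3$ or $D_2$) for each such $L$. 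Your tabulation of all four numbers $L\cdot D_i$ in the first paragraph would let you repair the argument the same way: from $(L+K_X)\cdot D_4\le 0$ deduce $\alpha+a\beta\le a+2$, hence $\beta=1$ (or $a=1$, $\beta=2$), and then read off the witness from $L\cdot D_3=\beta$.

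Two smaller points. First, you misread the $L^2\ge 10$ refinement: it removes only the configuration $D\cdot L=3$, $D^2=1$; the case $D\cdot L=2$, $D^2=0$ remains in the list. Second, your plan for that refinement (note $D^2=1$ forces $a=1$ and $D=D_4$, then check $L=D_3+2D_4$ and $L=2D_3+D_4$ have $L^2=8$ and $5$) is correct and would work, but in the paper's approach it is vacuous for Hirzebruch surfaces: every witness produced in its case analysis already lies in the shorter list $\{(1,-1),(1,0),(2,0)\}$, so nothing needs to be excluded.
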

\begin{proof}
	Consider the Hirzebruch surface $X=\shF_r=\PP(\shO_{\PP^1}\oplus\shO_{\PP^1}(r))$, $r\ge 1$ as in Example \ref{Nef cone if Hirzebruch surface}. We have
	\[\Pic(\shF_r)\simeq \left\{aD_3+bD_4\vt a,b\in\ZZ\right\}.\]
	The canonical divisor of $X$ is given by
	\[K_X=-(D_1+D_2+D_3+D_4)\sim -(2-a)D_3-2D_4.\]
	Recall that $D_1^2=D_3^2=0$, $D_2^2=-a$, $D_4^2=a$, $D_1\cdot D_2=D_2\cdot D_3=D_3\cdot D_4=D_4\cdot D_1=1$, and $D_1\cdot D_3=D_2\cdot D_4=0$ (cf. \cite[Lemma 10.4.1]{Cox2011}). 
	
	Let $L$ be an ample line bundle over $\shF_r$. Then $L^2>0$. We have two cases as follows.
	\begin{itemize}
		\item If $r=1$ then $K_X=-D_3-2D_4$. For $L$ to be ample while $L+K_X$ is not nef, $L$ has to be of the form $L\sim cD_3+D_4$, $c>0$. In this case, take $D=D_3$, then
		\[L\cdot D=1\text{ and } D^2=0.\]
	
		For $L$ to be ample while $L+K_X$ is not ample, $L$ has to be of the form $L\sim D_3+cD_4$, or $L\sim cD_3+D_4$, or $L\sim cD_3+2D_4$, where $c\ge 1$.
	
		\begin{enumerate}
			\item If $L\sim D_3+cD_4$, take $D=D_2$, then
			\[L\cdot D=1\text{ and } D^2=-1.\]
			\item If $L\sim cD_3+D_4$, take $D=D_3$, then
			\[L\cdot D=1\text{ and } D^2=0.\]
			\item If $L\sim cD_3+2D_4$, take $D=D_3$, then
			\[L\cdot D=2\text{ and } D^2=0.\]
		\end{enumerate}	
		
		\item $r\ge 2$:
		For $L$ to be ample but $K_X+L$ is not nef, $L$ has the form
		\[L\sim D_4+cD_3 \hspace{5mm} (c\ge 0).\]
		Take $D=D_3$, then $L\cdot D=1\text{ and } D^2=0$.
		
		For $L$ to be ample but $K_X+L$ is not, $L$ has the form $L\sim cD_3+D_4$ or $L\sim cD_3+2D_4$, where $c\ge 1$.
		
		\begin{enumerate}
			\item If $L\sim cD_3+D_4$, take $D=D_3$, then
			\[L\cdot D=1\text{ and } D^2=0.\]
			\item If $L\sim cD_3+2D_4$, take $D=D_3$, then
			\[L\cdot D=2\text{ and } D^2=0.\]
		\end{enumerate}
		
	\end{itemize}
\end{proof}

Finally, we will give the proof for the final case of Proposition \ref{My Reider variant}, when $X$ is an arbitrary blowup of $\PP^1\times\PP^1$ or the Hirzebruch surface. 
\begin{proof}[Proof of Proposition \ref{My Reider variant}]\label{proof of Reider}

	The sufficiency trivially holds by Corollary \ref{(L+KX)D=LD-D^2-2}. We now prove the necessity.

	By the classification of smooth projective toric surfaces, the proofs for the cases of $\PP^1\times\PP^1$ (Lemma \ref{P1xP1}) and $\shF_a$ (Lemma \ref{Hirzebruch}), it suffices to prove the proposition in the case that the fan $\Sigma$ of $X$ has at least $5$ rays.
	
	We first prove part 1. Suppose that $K_X+L$ is not basepoint free. Then there exists $\rho\in \Sigma(1)$ such that $(K_X+L)\cdot D_{\rho}<0$. Take $D=D_{\rho}$. By Lemma \ref{K_X x D_i=b_i-2},
	\[(L+K_X)\cdot D=L\cdot D-D^2-2<0.\]
	This implies $L\cdot D< D^2+2$, so since $L$ is ample,
	\begin{equation}\label{eq1}
		0\le L\cdot D-1\le D^2.
	\end{equation}
	\begin{itemize}
		\item If $D^2\le -1$, then $L\cdot D\le 0$, which is a contradiction to the hypothesis that $L$ is ample.
		\item If $D^2=0$, either $D\cdot L=0$ or $D\cdot L=1$. But $D\cdot L>0$ since $L$ is ample. Thus $D\cdot L=1$. The proposition holds for this case.
	\end{itemize}
	It remains to show that $D^2$ cannot be positive. 
	Since the fan of $X$ contains at least 5 rays, by Lemma \ref{L^2>=LD+4}, 
	\begin{equation}\label{eq2}
		L^2\ge L\cdot D+4.
	\end{equation}
	In addition, it follows from Corollary \ref{Hodge inequality} that
	\begin{equation}\label{eq4}
		(L\cdot D)^2\ge L^2\cdot D^2.
	\end{equation}
	Combining \eqref{eq4} with \eqref{eq1} and \eqref{eq2} yields
	\[(L\cdot D)^2\ge (L\cdot D-1)(L\cdot D+4)=(L\cdot D)^2+3L\cdot D-4.\]
	This implies $L\cdot D\le 1$. The only possibility is $L\cdot D=1$. Then by \eqref{eq4}, $D^2=L^2=1$, which is impossible since $L^2\ge L\cdot D+4=5$. Therefore, it cannot be the case that $D^2>0$.
	
	We now prove the second part of the proposition. Suppose that $K_X+L$ is not ample, so there exists $\rho\in \Sigma(1)$ such that $(K_X+L)\cdot D_{\rho}\le 0$. Let $D=D_{\rho}$. By Corollary \ref{(L+KX)D=LD-D^2-2},
	\begin{align*}
		(L+K_X)\cdot D=L\cdot D-D^2-2\le 0.
	\end{align*}
	This implies $L\cdot D\le  D^2+2$; hence,
	\begin{equation}\label{eq5}
		1\le L\cdot D\le D^2+2.
	\end{equation}
	\begin{itemize}
		\item If $D^2=-1$, then $1\le L\cdot D\le 1$, so $L\cdot D=1$.
		\item If $D^2=0$, either $D\cdot L=1$ or $D\cdot L=2$.
	\end{itemize}
	Now we consider the case that $D^2\ge 1$. Since the fan of $X$ contains at least 5 rays, by Lemma \ref{L^2>=LD+4},
	\begin{equation}\label{eq6}
		L^2\ge  L\cdot D+4.
	\end{equation}
	By Corollary \ref{Hodge inequality},
	\begin{equation}\label{eq7}
		(L\cdot D)^2\ge L^2\cdot D^2
	\end{equation}
	Since $D^2\ge 1$, then by \eqref{eq6}, $L^2\ge 5$. Thus by \eqref{eq7}, $(L\cdot D)^2\ge L^2\cdot D^2\ge 5$, so $L\cdot D>2$. It follows that $L\cdot D\ge 3$. Hence, $L\cdot D-2\ge 1$. This inequality combining with \eqref{eq5} and \eqref{eq6} yields
	\[(L\cdot D)^2\ge (L\cdot D-2)(L\cdot D+4)=(L\cdot D)^2+2L\cdot D-8.\]
	This implies $L\cdot D\le 4$. The only possibilities are $L\cdot D=3$ or $L\cdot D=4$. 
	\begin{itemize}
		\item If $D^2=1$ then $L\cdot D\le 3$ by \eqref{eq5}. Since $L\cdot D$ can only be either $3$ or $4$, $L\cdot D=3$ in this case. Furthermore, suppose that $L^2\ge 10$. If $L\cdot D=3$ and $D^2=1$ then $9=(L\cdot D)^2<  10\le L^2\cdot D^2$, a contradiction to \eqref{eq7}.
		
		\item Now assume that $D^2\ge 2$. If $L\cdot D=3$, then $L^2\ge 7$ by \eqref{eq6}, and $L^2\cdot D^2\ge 7\cdot 2=14>9=(L\cdot D)^2$, a contradiction to \eqref{eq7}. Now assume that $L\cdot D=4$. Then the polygon $P_L$ associated to $L$ has at least $5$ vertices and one of its edges has lattice length $4$ by Lemma \ref{lattice length}. Hence, $L^2\ge 9$ by Lemma \ref{Volume of polygon >=9}. It follows that $16=(L\cdot D)^2< 18\le L^2\cdot D^2$, a contradiction to \eqref{eq7}.
	\end{itemize}
	The proposition follows.
\end{proof}

\section{Some Applications}
The following corollary gives an affirmative answer for a stronger form of Fujita's conjecture (Conjecture \ref{Fujita conjecture}) in case of smooth complete toric surfaces. Note that for $n$-dimensional toric varieties, the Fujita's conjecture is in fact a corollary of \cite[Corollary 0.2]{Fujino2003} and \cite[Theorem 1]{Payne2006}.
\begin{corollary}[{\cite{Fujino2003, Payne2006}}]
	Let $X$ be a smooth complete surface not isomorphic to $\PP^2$. Let $L$ be an ample line bundle on $X$ such that $L\cdot C\ge 2$ for all toric invariant curve $C\subset X$. Then $\shO_X(K_X+L)$ is globally generated. If $L^2\ge 10$  and $L\cdot C\ge 3$ for all toric invariant curve $C\subset X$, then $\shO_X(K_X+L)$ is very ample.
\end{corollary}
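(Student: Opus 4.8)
The plan is to prove both assertions as contrapositives of Proposition \ref{My Reider variant}, after translating its conclusions---which are phrased in terms of an \emph{effective} torus-invariant divisor $D$---into statements about the torus-invariant curves $C\subset X$, i.e.\ the prime torus-invariant divisors $D_\rho$. The one device that makes this translation work is an elementary observation which I would record first: if $D=\sum_\rho a_\rho D_\rho$ is a nonzero effective torus-invariant divisor (so $a_\rho\in\ZZ_{\ge 0}$ with $\sum_\rho a_\rho\ge 1$), then ampleness of $L$ gives $L\cdot D_\rho=L\cdot C\ge 1$ for every ray $\rho$. Consequently a uniform lower bound $L\cdot C\ge k$ for all torus-invariant curves forces $L\cdot D=\sum_\rho a_\rho\,(L\cdot D_\rho)\ge k\sum_\rho a_\rho\ge k$ on every nonzero effective torus-invariant $D$. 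This is what lets the curve-wise hypotheses of the corollary engage with the divisor-wise conclusions of the proposition.

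For the global generation statement I would argue by contradiction. Suppose $L\cdot C\ge 2$ for every torus-invariant curve $C$ but $\shO_X(K_X+L)$ is not globally generated, equivalently $|K_X+L|$ is not basepoint free. Since $X\not\cong\PP^2$, part (1) of Proposition \ref{My Reider variant} produces an effective torus-invariant divisor $D$ with $L\cdot D=1$. But the observation above turns the hypothesis $L\cdot C\ge 2$ into $L\cdot D\ge 2$, a contradiction. Hence $|K_X+L|$ is basepoint free and $\shO_X(K_X+L)$ is globally generated.

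For very ampleness I would add the hypotheses $L^2\ge 10$ and $L\cdot C\ge 3$ for all torus-invariant curves, and suppose for contradiction that $K_X+L$ is not ample. This is exactly where $L^2\ge 10$ is needed: it licenses the use of the \emph{second} list in part (2) of Proposition \ref{My Reider variant}, in which every non-ampleness witness $D$ satisfies $L\cdot D\le 2$, the otherwise problematic case $L\cdot D=3,\ D^2=1$ having been eliminated. Applying the observation once more yields $L\cdot D\ge 3$, contradicting $L\cdot D\le 2$. Therefore $K_X+L$ is ample, and since ampleness and very ampleness coincide for line bundles on smooth complete toric surfaces \cite[Theorem 6.1.15]{Cox2011}, $\shO_X(K_X+L)$ is very ample.

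The argument is in essence a bookkeeping translation, so I do not expect a genuine obstacle. The only point that requires care is the passage from the abstract effective divisor $D$ supplied by the proposition to the curve-wise hypotheses on the $D_\rho$; the clean way to handle this is the uniform bound above, which sidesteps any need to control the $D^2$ values appearing in the proposition. The conceptual heart is simply the recognition that $L^2\ge 10$ is precisely the condition under which the $L\cdot D=3$ case disappears, which is what allows the bound $L\cdot C\ge 3$---rather than a stronger $L\cdot C\ge 4$---to close the very ampleness argument.
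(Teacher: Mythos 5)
Your proposal is correct and follows essentially the same route as the paper: argue by contradiction, applying Proposition \ref{My Reider variant} to produce a witness $D$ with $L\cdot D\le 1$ (resp.\ $L\cdot D\le 2$ when $L^2\ge 10$), which the hypothesis $L\cdot C\ge 2$ (resp.\ $L\cdot C\ge 3$) rules out. If anything, your write-up is more complete than the paper's own proof, which records only the global generation half and silently treats the effective divisor $D$ as a torus-invariant curve; your observation that $L\cdot D=\sum_\rho a_\rho(L\cdot D_\rho)\ge k$ for nonzero effective $D$, and your explicit appeal to the equivalence of ample and very ample on smooth complete toric surfaces, fill exactly those small gaps.
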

\begin{proof}
	Suppose that $\shO_X(K_X+L)$ is not globally generated. By Proposition \ref{My Reider variant}, there exists a toric invariant curve $C$ such that $L\cdot C=0$ or $L\cdot C=1$, a contradiction.
\end{proof}

As a corollary, we have a stronger form of \cite[Corollary 2.7]{Lazarsfeld1994} for smooth toric surfaces as follows.
\begin{corollary}
	If $A$ is an ample line bundle on a smooth complete toric surface $X$ not isomorphic to $\PP^2$, then $|K_X+2A|$ is nef, and $|K_X+4A|$ is very ample.
\end{corollary}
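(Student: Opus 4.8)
The plan is to read off both assertions from Proposition~\ref{My Reider variant} applied to $L=2A$ and $L=4A$, the only real input being an elementary divisibility remark. Namely, for any integer $m\ge 1$ and any effective torus-invariant divisor $D$ we have $D\cdot(mA)=m\,(D\cdot A)$, so $D\cdot(mA)$ is a multiple of $m$. Moreover $A$ is ample and any such nonzero $D$ is an effective curve, so $D\cdot A>0$; since intersection numbers on a smooth surface are integers, $D\cdot A\ge 1$ and hence $D\cdot(mA)\ge m$. This observation alone will eliminate every obstruction class appearing in the proposition.

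First I would establish that $|K_X+2A|$ is nef. On a smooth complete toric surface a line bundle is nef precisely when it is globally generated, i.e.\ basepoint free, so it suffices to check that $K_X+2A$ is basepoint free. By the first part of Proposition~\ref{My Reider variant} applied with $L=2A$, the system $|K_X+2A|$ fails to be basepoint free only if there is an effective torus-invariant $D$ with $D\cdot(2A)=1$ (and $D^2=0$). But $D\cdot(2A)=2\,(D\cdot A)$ is even, so no such $D$ exists; therefore $K_X+2A$ is basepoint free and thus nef.

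Next I would show that $|K_X+4A|$ is very ample. Since ampleness and very ampleness coincide on a smooth complete toric variety, it is enough to prove that $K_X+4A$ is ample. Applying the second part of Proposition~\ref{My Reider variant} with $L=4A$, every obstruction to ampleness listed there requires $D\cdot L\in\{1,2,3\}$. As $D\cdot(4A)=4\,(D\cdot A)\ge 4$, none of these values can be attained, so no obstruction survives and $K_X+4A$ is ample, hence very ample.

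I do not expect a genuine obstacle here, as all the difficulty has been absorbed into Proposition~\ref{My Reider variant} and the argument is purely a divisibility count. The only points deserving care are the equivalence of nef and basepoint free on toric surfaces, which I would invoke as a standard fact, and the verification that $D\cdot A\ge 1$ for the divisors in question, which rests on the ampleness of $A$ and the integrality of intersection numbers. It is also worth noting that this explains why one cannot in general lower $4A$ to $3A$: for $L=3A$ the value $D\cdot L=3$ is attainable, so the obstruction class $D\cdot L=3$, $D^2=1$ is not automatically excluded.
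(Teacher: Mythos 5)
Your proof is correct and takes essentially the same route as the paper: both deduce the corollary from Proposition \ref{My Reider variant} by observing that $mA\cdot D=m(A\cdot D)\ge m$ for every nonzero effective torus-invariant divisor $D$, which rules out all listed obstruction values. The only cosmetic difference is that the paper invokes the refined $L^2\ge 10$ form of part (2) (via $(4A)^2\ge 16>10$) to handle very ampleness, whereas you exclude the case $D\cdot L=3$ directly from $D\cdot(4A)\ge 4$; both are immediate.
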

\begin{proof}
	Take $L=2A$, then for any toric invariant curve $C\subset X$, $L\cdot C=2A\cdot C\ge 2$. By Proposition \ref{My Reider variant}, $|K_X+2A|$ is nef. Similarly, take $L'=4A$, then $(L')^2=16A^2>10$, and $L\cdot C=4A\cdot C\ge 4$. By Proposition \ref{My Reider variant}, $|K_X+4A|$ is very ample.
\end{proof}

\begin{remark}
	It would be interesting to see if we can apply the classification in Proposition \ref{My Reider variant} to the study of Iskovskikh-Shokurov conjecture \cite{Iskovskikh2005} for conic bundles over smooth toric surfaces.
\end{remark}

%====================================
% 
%           BIBILIOGRAPHY
%
%====================================
\providecommand{\bysame}{\leavevmode\hbox to3em{\hrulefill}\thinspace}
\providecommand{\MR}{\relax\ifhmode\unskip\space\fi MR }
% \MRhref is called by the amsart/book/proc definition of \MR.
\providecommand{\MRhref}[2]{%
	\href{http://www.ams.org/mathscinet-getitem?mr=#1}{#2}
}
\providecommand{\href}[2]{#2}

%\printindex
\end{document}